\newcommand{\sv}[3]{\begin{smallmatrix}#1\\#2\\#3\end{smallmatrix}}
\newcommand{\vd}{\smash{\vdots}}
\newcolumntype{C}{>{\hfil$}p{.5em}<{$\hfil}}
\newcommand{\ray}[2]{\smash{\begin{array}{CC} #1 & #2 \end{array}}}
\DeclareMathOperator{\act}{act}
\DeclareMathOperator{\Att}{Att}
\newcommand{\g}{\mathfrak{g}}
\newcommand{\gl}{\mathfrak{gl}}
\title{Weight Spaces and Attracting Sets for Torus Actions on Quiver Moduli}
\author{Magdalena Boos \and Hans Franzen}
\address{
Hans Franzen\\ Ruhr University Bochum\\ Faculty of Mathematics\\
Universit\"ats\-strasse 150\\
44780 Bochum (Germany)}
\email{hans.franzen@rub.de}
\address{
Magdalena Boos\\ Ruhr University Bochum\\ Faculty of Mathematics\\
Universit\"ats\-strasse 150\\
44780 Bochum (Germany)}
\email{magdalena.boos-math@rub.de}
\date{}
\begin{document}
		
	\begin{abstract}
		We study torus actions on moduli spaces of quivers. First we give a description of the weight spaces of the induced action of the tangent space to a torus-fixed point. Then we focus on actions of tori of rank one and derive an explicit form for the attractors in the Bia{\l}ynicki-Birula decomposition.
	\end{abstract}

	\maketitle
	
	\section{Introduction}
	
	An initial motivation for the study of moduli spaces of quivers is the classification problem for quiver representations. There is the well-known trichotomy for representations of quivers over an algebraically closed field which separates quivers by their representation type. They are either of finite type, tame, or wild. For representations of finite type or tame quivers it is possible to parametrize all indecomposable representations up to isomorphism but for any wild quiver -- i.e.\ a quiver which is not an orientation of a Dynkin digram of type $A$, $D$, or $E$, or of an extended Dynkin diagram of type $\tilde{A}$, $\tilde{D}$, or $\tilde{E}$ -- this is a very difficult problem. Kac uses in \cite{Kac:80, Kac:82, Kac:83} geometric methods to understand the structure of the entirety of indecomposable representations of a wild quiver. He counts isomorphism classes of absolutely indecomposable representations over finite fields and studies the counting functions which he shows are polynomials over the integers. He formulates a series of conjectures related to these counting polynomials, one of which states that the space of indecomposable representations possesses a cell decomposition.
	
	King applies in \cite{King:94} methods of Mumford's geometric invariant theory to construct moduli spaces of stable representations of a quiver. Stable representations are indecomposable, so these moduli parametrize an interesting subclass of indecomposable representations up to isomorphism. If the quiver does not contain oriented cycles and if semi-stability and stability agree then these moduli spaces are smooth projective varieties. They have very nice geometric properties. They are rational \cite{Schofield:01}, their cohomology is concentrated in even degrees and the cycle map is an isomorphism \cite{KW:95}. This suggests that (smooth projective) quiver moduli might possess a cell decomposition. It is, so far, unknown if they do.
	
	One possible approach to this problem is to find reasonable actions of algebraic tori on these moduli spaces. Such actions were introduced by Weist in \cite{Weist:13}. It turns out that the fixed point components under these actions can again be identified with moduli spaces of another quiver, the so-called universal abelian covering quiver. The components are not always isolated fixed points so this method does in general not provide a cell decomposition of the moduli space. However, interesting information about cohomology can be extracted from these actions. If we furthermore restrict to actions of tori of rank one, then we may apply methods of Bia{\l}ynicki-Birula \cite{BB:73} to obtain a partition of the space into affine bundles over fixed point components. The dimensions of the fibers of these affine spaces agree with dimensions of weight spaces of the induced action on the tangent space.
	
	In this paper we obtain two main results. We obtain a representation-theoretic description of the weight spaces of the tangent space to a torus fixed point. More precisely, we identify the weight space with an $\Ext$-group of the universal abelian covering quiver. We moreover derive an explicit formula for its dimension in terms of the Euler form of the universal abelian covering quiver. This is Theorem \ref{t:wtsp}. Our second main result, Theorem \ref{t:att_explicit}, is concerned with the description of the attractors of the Bia{\l}ynicki-Birula decomposition for actions of tori of rank one. We give a constructive way to compute a full system of representatives of the attractors by the choice of vector space complements of certain, explicitly given, linear maps.
	The attracting sets are also studied in a recent paper by Kinser and Weist \cite{KW:19}. We refer to section \ref{s:attr} for a discussion of the relation of their results to ours.

	We structure the article as follows. In section \ref{s:quivmod}, basics and references for quiver moduli are given; results on torus actions on the latter can be found in section \ref{s:torusActions}. We state the Bia{\l}ynicki-Birula decomposition of a smooth projective $\G_m$-variety in section \ref{s:bialynicki} and deduce consequences for its Poincar\'{e} polynomial. In section \ref{s:torusquot}, torus actions on geometric quotients are considered from a general point of view and induced torus actions on the tangent spaces are introduced. These results are applied in section \ref{s:wtsp} to the more concrete setup of quiver moduli. We calculate a dimension formula for the  weight spaces of tangent spaces on stable moduli spaces. Attractors are calculated in section \ref{s:attr} before applying our results to the example of the $l+1$-Kronecker quiver in section \ref{s:appl}.

	\begin{ack*}
		These results were obtained while the authors spent 2 months as Leibniz Fellows at Mathematisches Forschungsinstitut Oberwolfach. We would like to thank the Leibniz Foundation and the MFO for support and hospitality. We would like to thank M.\ Brion, R.\ Kinser, and T.\ Weist for interesting discussions and helpful remarks.
	\end{ack*}

	\section{Quiver Moduli}\label{s:quivmod}
	
	Let $Q$ be a quiver. This is, by definition, an oriented graph. Its sets of vertices and arrows are denoted $Q_0$ and $Q_1$, respectively. They need not be finite. For an arrow $a \in Q_1$ denote $s(a)$ and $t(a)$ its source and target. All quivers will be assumed to have only finitely many arrows $a \in Q_1$ such that $s(a) = i$ and $t(a) = j$ for any two vertices $i,j \in Q_0$. We assume that the reader is familiar with the basics of representations of quivers and refer to \cite[II.1]{ASS:06} for more details. Let $M$ be a representation of $Q$ over a field $k$. All representations in the article are assumed to be finite-dimensional, i.e.\ $\dim \smash{\bigoplus_{i \in Q_0}} M_i < \infty$. Let $\rep_k(Q)$ be the cateogory of finite-dimensional representations of $Q$ over $k$. Let $\Lambda(Q)$ be the subgroup of those $d \in \smash{\Z^{Q_0}}$ for which $d_i = 0$ for all but finitely many $i \in Q_0$. Let $\Lambda_+(Q) = \Lambda(Q) \cap \smash{\Z_{\geq 0}^{Q_0}}$. The elements of $\Lambda_+(Q)$ will be called dimension vectors of $Q$. The dimension vector of $M$ is the tuple $\dimvect M := (\dim M_i)_{i \in Q_0} \in \Lambda_+(Q)$. Given $d \in \Lambda_+(Q)$ we fix vector spaces $V_i$ of dimension $d_i$ and we define the $k$-vector space
	$$
		R(Q,d) := \bigoplus_{a \in Q_1} \Hom(V_{s(a)},V_{t(a)}).
	$$
	Elements of this vector space give rise to representations of $Q$ of dimension vector $d$. The space $R(Q,d)$ is acted upon by the group $G_d := \prod_{i \in Q_0} \GL(V_i)$ via
	$$
		g \cdot M = (g_{t(a)}M_ag_{s(a)}^{-1})_{a \in Q_0}
	$$
	where $g = (g_i)_{i \in Q_0} \in G_d$ and $M = (M_a)_{a \in Q_1} \in R(Q,d)$. Two elements of $R(Q,d)$ are isomorphic as representations if and only if they lie in the same $G_d$-orbit. Note that the image $\Delta$ of the homomorphism $k^\times \to G_d$, which embeds $t \in k^\times$ diagonally into each component, acts trivially on $R(Q,d)$. The action of $G_d$ hence descends to an action of $PG_d := G_d/\Delta$.
	
	Suppose in the following that the field $k$ is algebraically closed. Then $R(Q,d)$ is the set of closed points of an affine space over $k$ and $G_d$ and $PG_d$ are sets of closed points of reductive algebraic groups. By abuse of notation, we denote this affine space also by $R(Q,d)$ and the algebraic groups also by $G_d$ and $PG_d$.
	
	Let $\theta : \Lambda(Q) \to \Z$ be a $\Z$-linear map. We call it a stability condition. It is given by integers $\theta_i$ for $i \in Q_0$. For a dimension vector $d \in \Lambda_+(Q) - \{0\}$ we define the $\theta$-slope $\mu(d) := \theta(d)/(\sum_i d_i)$. For a representation $M \neq 0$ of $Q$ let $\mu(M) := \mu(\dimvect M)$. A representation $M$ is called $\theta$-stable (resp.\ semi-stable) if $\mu(M') < \mu(M)$ (resp. $\mu(M') \leq \mu(M)$) for every proper non-zero subrepresentation $M'$ of $M$. We obtain two Zariski open, but possibly empty, subsets
	$$
		R(Q,d)^{\theta-\st} \sub R(Q,d)^{\theta-\sst} \sub R(Q,d)
	$$
	whose closed points are precisely the sets of those closed points of $R(Q,d)$ which are $\theta$-stable (resp.\ $\theta$-semi-stable) as representations of $Q$. In \cite[Prop.\ 3.1]{King:94} King shows that $R(Q,d)^{\theta-\sst}$ agrees with the set of semi-stable points with respect to the $PG_d$-linearization of the (trivial) line bundle on $R(Q,d)$ which is given by the character $\chi: PG_d \to \G_m$ defined by
	$$
		\chi(g) = \prod_{i \in Q_0} (\det g_i)^{\theta(d) - \theta_i\sum_j d_j};
	$$
	observe that this is indeed a character of $PG_d$.
	Note that King requires $\theta(d) = 0$. The modification of the exponent in the definition of $\chi$ is to get rid of this assumption; it was introduced in \cite[3.4]{Reineke:08}. The set of $\theta$-stable points agrees with the set of properly stable points with respect to $\chi$ in the sense of Mumford \cite[Def.\ 1.8]{GIT:94}.
	
	We define $M^{\theta-\sst}(Q,d) := R(Q,d)^{\theta-\sst}/\!\!/PG_d$ and $M^{\theta-\st}(Q,d) := R(Q,d)^{\theta-\st}/PG_d$. The morphism $M^{\theta-\sst}(Q,d) \to M^{0-\sst}(Q,d) = \Spec(k[R(Q,d)]^{PG_d})$ is projective. Le Bruyn and Procesi show in \cite[Thm.\ 1]{LP:90} that $k[R(Q,d)]^{PG_d}$ is generated by traces along oriented cycles of $Q$. So if $Q$ is acyclic, then $M^{\theta-\sst}(Q,d)$ is a projective variety. The variety $M^{\theta-\st}(Q,d)$ is an open subset of $M^{\theta-\sst}(Q,d)$. The quotient morphism $R(Q,d)^{\theta-\st} \to M^{\theta-\st}(Q,d)$ is a $PG_d$-fiber bundle in the \'{e}tale topology with a smooth total space, so $M^{\theta-\st}(Q,d)$ must itself be smooth.
	
	Suppose that the dimension vector $d$ is $\theta$-coprime, by which we mean that $\mu(d') \neq \mu(d)$ for every $d' \in \Lambda_+(Q) - \{0,d\}$ for which $d-d' \in \Lambda_+(Q)$. It can easily be seen that this forces $d$ to be indivisible, i.e.\ the greatest common divisor of the $d_i$'s is one. Conversely, for any indivisible dimension vector $d$, there exists a stability condition $\theta$ for which $d$ is $\theta$-coprime. Under this assumption, we obtain $R(Q,d)^{\theta-\sst} = R(Q,d)^{\theta-\st}$ and, of course, also $M^{\theta-\sst}(Q,d) = M^{\theta-\st}(Q,d)$. We write just $R(Q,d)^\theta$ and $M^\theta(Q,d)$ in that case. If moreover, the quiver $Q$ is acyclic, then the considerations in the previous paragraph show that $M^\theta(Q,d)$ is a smooth projective variety.

	\section{Torus Actions on Quiver Moduli} \label{s:torusActions}
	
	Let $Q$ be a quiver, $d$ a dimension vector, and $\theta$ a stability condition. Let $T = \G_m^n$ be an algebraic torus. Suppose $T$ acts on $R(Q,d)$ by
	$$
		t.M = (w_a(t)M_a)_a
	$$
	where $w_a$ is a character of $T$. Explicitly, it is given by $w_a(t) = t_1^{w_{a,1}}\ldots t_n^{w_{a,n}}$ for integers $w_{a,\nu}$. The above action commutes with the $PG_d$-action on $R(Q,d)$ and respects $\theta$-stability. Hence it descends to an action on the quotient $M^{\theta-\st}(Q,d)$. We say that $T$ acts on $M^{\theta-\st}(Q,d)$ by the weights $w_a$. The weights $w_a$ give rise to a homomorphism $w: T \to \smash{\G_m^{Q_1}}$ of tori.	
	
	Let us briefly recall some arguments of \cite{Weist:13}. Weist shows there that a fixed point $[M] \in M^{\theta-\st}(Q,d)^T$ yields a homomorphism $\rho: T \to PG_d$; it admits a lift $\dot{\rho}: T \to G_d$. Via this lift, $T$ acts on $V_i$ by $t.v = \dot{\rho}_i(t)v$. This gives a weight space decomposition $V_i = \sum_\chi V_{i,\chi}$ which satisfies
	$$
		M_a(V_{i,\chi}) \sub V_{j,\chi+w_a}
	$$
	for all $a: i \to j$.
	
	This can be rephrased representation-theoretically. We define the (infinite) quiver $Q(w)$ as follows. We set
	\begin{align*}
		Q(w)_0 &= Q_0 \times \Z^n & Q(w)_1 &= Q_1 \times \Z^n
	\end{align*}
	and the sources and targets of the arrows are defined by $s(a,\chi) = (s(a),\chi)$ and $t(a,\chi) = (t(a),\chi+w_a)$.	
	There is a morphism $c: Q(w) \to Q$ of quivers which sends $(i,\chi)$ to $i$ and $(a,\chi)$ to $a$. Consider $\Lambda(Q(w)) \sub \Z^{Q(w)_0}$. Then $c$ extends by linearity to $c: \Lambda(Q(w)) \to \Lambda(Q)$. A dimension vector $\beta \in \Lambda_+(Q(w))$ is called compatible with $d$ if $c(\beta) = d$. Moreover, we obtain an additive functor $c: \rep_k(Q(w)) \to \rep_k(Q)$ which assigns to a representation $N$ of $Q(w)$ the representation $M = c(N)$ given by
	\begin{align*}
		M_i &= \bigoplus_{\chi \in \Z^n} N_{i,\chi} & M_a &= \sum_{\chi \in \Z^n} N_{a,\chi}.
	\end{align*}
	Clearly, $\dimvect c(N) = c(\dimvect N)$. A representation $N$ of $Q(w)$ such that $c(N) = M$ is called a lift of $M$ to $Q(w)$.
	
	We have an action of the abelian group $\Z^n$ on $Q_0 \times \Z^n$ by $\chi.(i,\xi) = (i,\chi+\xi)$. This action extends to a $\Z^n$-action on $\Lambda_+(Q(w))$. We denote the application of $\chi$ to $\beta \in \Lambda_+(Q(w))$ by $s_{\chi}(\beta)$, concretely 
	$$
		s_\chi(\beta)_{i,\xi} = \beta_{i,\chi+\xi}.
	$$
	In the same vein, we define for a representation $N$ of $Q(w)$ a representation $s_\chi(N)$ by $s_\chi(N)_{i,\xi} = N_{i,\chi+\xi}$. The map $c: \Lambda_+(Q(w)) \to \Lambda_+(Q)$ is $\Z^n$-invariant. Two elements $\beta,\gamma \in \Lambda_+(Q(w))$ are called equivalent if they lie in the same $\Z^n$-orbit. 
	The functor $c: \rep_k(Q(w)) \to \rep_k(Q)$ is also $\Z^n$-invariant. Two representations $N$ and $N'$ of $Q(w)$ are called equivalent if $N' \cong s_\chi(N)$ for some $\chi \in \Z^n$.
	
	Given a fixed point $[M] \in M^{\theta-\st}(Q,d)^T$, the choice of a lift $\dot{\rho}$ of $\rho$ determines a lift $N$ of $M$ to $Q(w)$; different lifts of $\rho$ give rise to equivalent lifts of $M$. A lift $N$ of $M$ is stable with respect to the stability condition $\smash{\hat{\theta}}$ of $Q(w)$, which is given by $\smash{\hat{\theta}_{i,\chi}} = \theta_i$ for all $\chi \in \Z^n$.
	Weist shows the following characterization of the fixed point set of the $T$-action.
	
	\begin{thm}[see {\cite[Thm.\ 3.8]{Weist:13}}] \label{t:thorsten}
		The fixed point set $M^{\theta-\st}(Q,d)^T$ is the finite disjoint union $\bigsqcup_\beta F_\beta$ into irreducible components, where $\beta$ ranges over a full system of representatives of equivalence classes of dimension vectors of $Q(w)$ that are compatible with $d$. The connected component $F_\beta$ is isomorphic to the moduli space 
		$$
			F_\beta \cong M^{\hat{\theta}-\st}(Q(w),\beta).
		$$
	\end{thm}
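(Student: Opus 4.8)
The plan is to establish a bijection --- first on $k$-points, then in families --- between the $T$-fixed points of $M^{\theta-\st}(Q,d)$ and the equivalence classes of $\hat\theta$-stable representations of $Q(w)$ whose dimension vector is compatible with $d$, and to promote it to an isomorphism $F_\beta\cong M^{\hat\theta-\st}(Q(w),\beta)$. I would begin (Step 1) by producing a lift. Given $[M]\in M^{\theta-\st}(Q,d)^T$, the orbit $PG_d\cdot M$ is $T$-stable and, since a $\theta$-stable point has trivial $PG_d$-stabilizer (its stabilizer in $G_d$ is $\Delta$), for each $t\in T$ there is a unique $\rho(t)\in PG_d$ with $\rho(t)\cdot M=t.M$; commutativity of the $T$- and $PG_d$-actions together with this uniqueness show that $\rho\colon T\to PG_d$ is a homomorphism of algebraic groups. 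As $1\to\Delta\to G_d\to PG_d\to1$ is a central extension by $\G_m$ and $T$ is a torus, the pullback $G_d\times_{PG_d}T$ is an extension of tori, hence a torus, and its projection to $T$ splits; this gives a lift $\dot\rho\colon T\to G_d$. The $\dot\rho$-eigenspace decompositions $V_i=\bigoplus_\chi V_{i,\chi}$ then satisfy $M_a(V_{i,\chi})\sub V_{j,\chi+w_a}$ for every $a\colon i\to j$, which is exactly the datum of a lift $N$ of $M$ to $Q(w)$, namely $N_{i,\chi}=V_{i,\chi}$ and $N_{(a,\chi)}=M_a|_{V_{s(a),\chi}}$; its dimension vector $\beta:=\dimvect N$ is compatible with $d$. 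Replacing $\dot\rho$ by another lift changes it by a character $T\to\Delta$, i.e.\ by some $\chi\in\Z^n$, and replaces $N$ by $s_\chi(N)$; replacing $M$ inside its $PG_d$-orbit replaces $N$ by an isomorphic representation. Hence $[M]$ determines the equivalence class of $\beta$.

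Next (Step 2) I would match stabilities, i.e.\ show that $M$ is $\theta$-stable if and only if $N$ is $\hat\theta$-stable. For any subrepresentation $N'\sub N$ one has $c(N')\sub c(N)=M$, and since $\hat\theta_{i,\chi}=\theta_i$ the slopes obey $\mu_{\hat\theta}(N')=\mu_\theta(c(N'))$ and $\mu_{\hat\theta}(N)=\mu_\theta(M)$, with $N'\mapsto c(N')$ sending proper nonzero subrepresentations to proper nonzero ones; this yields at once the implication ``$M$ $\theta$-stable $\Rightarrow$ $N$ $\hat\theta$-stable''. For the converse, a subrepresentation of $M$ that is graded for $\dot\rho$ is precisely one of the form $c(N')$, so it is enough to find graded destabilizing subrepresentations. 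Here I would use that $M$ and $t.M$ have the same lattice of subrepresentations --- scaling the structure maps by the $w_a(t)$ changes neither which subspaces are subrepresentations nor slopes --- so that the Harder--Narasimhan filtration and the slope-$\mu(M)$ socle of $M$ coincide with those of $t.M$ and, being carried onto one another by the isomorphism $\dot\rho(t)^{-1}\colon t.M\to M$, are $\dot\rho$-stable. Thus, if $N$ is $\hat\theta$-stable and $M$ were not $\theta$-semistable, the maximal destabilizing subrepresentation of $M$ would give a subrepresentation of $N$ violating $\hat\theta$-semistability; and if $M$ were $\theta$-semistable but not $\theta$-stable, then either its slope-$\mu(M)$ socle is a proper graded subrepresentation of slope $\mu(M)$ --- again violating $\hat\theta$-stability of $N$ --- or $M$ is semisimple of slope $\mu(M)$ with at least two simple summands, in which case the identity $\operatorname{End}_Q(c(N))=\bigoplus_{\chi}\Hom_{Q(w)}(N,s_\chi(N))$, together with $\hat\theta$-stability of $N$ and finiteness of $\operatorname{supp}(\beta)$ (which forces $N\cong s_\chi(N)$ only for $\chi=0$), gives $\operatorname{End}_Q(M)=k$, incompatible with two simple summands. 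Hence $c(N)$ is $\theta$-stable.

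Finally (Step 3) I would pass to families. Fixing $\beta$ and choosing the $V_{i,\chi}$ with $\dim V_{i,\chi}=\beta_{i,\chi}$, the space $R(Q(w),\beta)=\bigoplus_{(a,\chi)}\Hom(V_{s(a),\chi},V_{t(a),\chi+w_a})$ is the linear subspace of $R(Q,d)$ consisting of the representations ``graded of type $\beta$'', which by Step 1 is the fixed locus of the action $M\mapsto\dot\rho(t)^{-1}(t.M)$; by Step 2 its intersection with $R(Q,d)^{\theta-\st}$ is $R(Q(w),\beta)^{\hat\theta-\st}$, a closed subvariety of $R(Q,d)^{\theta-\st}$. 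The composite $R(Q(w),\beta)^{\hat\theta-\st}\hookrightarrow R(Q,d)^{\theta-\st}\to M^{\theta-\st}(Q,d)$ is $PG_\beta$-invariant with image a subset $F_\beta$ of the $T$-fixed locus; and, since the uniqueness of $\rho$ forces any $G_d$-isomorphism $c(N)\cong c(N')$ to be graded up to a character of $T$, which must be trivial because $\beta$ has finite support, its fibres are single $PG_\beta$-orbits. Descending along the \'etale-locally trivial $PG_\beta$-bundle $R(Q(w),\beta)^{\hat\theta-\st}\to M^{\hat\theta-\st}(Q(w),\beta)$ then gives an isomorphism of the latter onto $F_\beta$. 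Since $R(Q(w),\beta)$ is an affine space, $M^{\hat\theta-\st}(Q(w),\beta)$ is irreducible and smooth; Step 1 shows the $F_\beta$ attached to inequivalent $\beta$ are disjoint and cover $M^{\theta-\st}(Q,d)^T$, which is noetherian, so only finitely many are nonempty, and since the fixed locus of a torus acting on a smooth variety is smooth, these $F_\beta$ are exactly its connected components. This gives the asserted decomposition.

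The step I expect to be the main obstacle is Step 3: turning the pointwise bijection into an isomorphism of reduced schemes requires careful control of the interplay between the GIT quotient and the torus action --- realizing $F_\beta$ as the image of the grading-compatible locus, pinning down the fibres of the quotient morphism over it, and descending the isomorphism through both the $PG_\beta$- and the $PG_d$-quotients --- together with the finiteness and connected-components claims. Step 2, by contrast, is essentially bookkeeping with slopes once one observes that $M$ and $t.M$ share their subrepresentation lattice, so that the Harder--Narasimhan filtration and socle are automatically $\dot\rho$-stable; and Step 1 is formal.
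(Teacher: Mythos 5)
The paper offers no proof of Theorem~\ref{t:thorsten}: it is quoted verbatim from Weist and the surrounding text only recalls the construction of $\rho$, its lift $\dot{\rho}$, the covering quiver $Q(w)$, and the equivalence relation on lifts. Your Steps 1 and 2 are a correct reconstruction of that argument. The lifting of $\rho$ through the central $\G_m$-extension, the observation that $M$ and $t.M$ share their subrepresentation lattice so that the Harder--Narasimhan filtration and the slope-$\mu(M)$ socle are $\dot{\rho}$-graded, and the endomorphism-ring identity $\operatorname{End}_Q(c(N))\cong\bigoplus_\chi\Hom_{Q(w)}(N,s_\chi(N))$ combined with finiteness of the support of $\beta$ to rule out the polystable case, are all sound and are essentially the ingredients of Weist's proof.

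The genuine gap sits exactly where you predicted, in Step 3, and you do not close it: after establishing that $R(Q(w),\beta)^{\hat{\theta}-\st}\to M^{\theta-\st}(Q,d)$ has image in the fixed locus with fibres the $PG_\beta$-orbits, you assert that ``descending along the \'etale-locally trivial $PG_\beta$-bundle then gives an isomorphism onto $F_\beta$.'' What descent gives you is only a \emph{bijective morphism} $M^{\hat{\theta}-\st}(Q(w),\beta)\to F_\beta$, and a bijective morphism of varieties need not be an isomorphism (nor is it yet clear at this point that $F_\beta$ is closed, smooth, or a whole connected component rather than a constructible subset of one). The missing idea is a tangent-space computation: at a fixed point $[M]$ with lift $N$ one has $T_{[N]}M^{\hat{\theta}-\st}(Q(w),\beta)\cong\Ext^1_{Q(w)}(N,N)$, while the tangent space to the fixed locus is the weight-zero part $(T_{[M]}M^{\theta-\st}(Q,d))_0$, which by the $\chi=0$ case of the computation in Section~\ref{s:wtsp} (Theorem~\ref{t:wtsp}) is again $\Ext^1_{Q(w)}(N,N)$. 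Hence the map is bijective with bijective differential between smooth varieties, so it is \'etale and bijective, hence an isomorphism, and its image is open and closed in the smooth fixed locus, i.e.\ a connected component. (Alternatively one can invoke Zariski's main theorem using normality of $X^T$, but some such argument is indispensable; in particular the characteristic-$p$ pitfall of Frobenius-type bijections has to be excluded.) Everything else in your sketch is correct.
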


	\begin{rem} \label{r:actions}
		There are two important special cases which the theory is most frequently applied to.
		\begin{enumerate}
			\item In \cite{Weist:13}, Weist uses the action of $T = \G_m^{Q_1}$ which acts as $t.M = (t_aM_a)$. In our language this amounts to choosing the characters $w_a = e_a$ (the unit vector in $\smash{\Z^{Q_1}}$). In fact the whole theory can be recovered from this special case because the weights $w_a$ give a homomorphism to $\smash{\G_m^{Q_1}}$. However, for uniformity of exposition we chose to state the theory for arbitrary tori.
			\item If $T = \G_m$ is a torus of rank one then the weights $w_a$ are just integers. In this situation we can apply Bia{\l}ynicki-Birula's theory. 
		\end{enumerate}
	\end{rem}
	
	We are going to recall some of Bia{\l}ynicki-Birula's results in the following section and provide a general way to describe weight spaces of torus actions on geometric quotients. We then apply this description to the situation of item (2) of Remark \ref{r:actions} to derive a formula for the dimensions of the weight spaces.

	\section{Bia{\l}ynicki-Birula Decompositions}\label{s:bialynicki}
	
	Let $X$ be a variety, again over an algebraically closed field $k$, equipped with an action of $T = \G_m^n$. The locus $X^T$ of fixed points is a closed subset; let $\smash{\bigsqcup_{\beta \in B}} \smash{F_\beta}$ be the decomposition into connected components -- indexed by a finite set $B$. If $X$ is smooth, then so is $X^T$ (see \cite{Iversen:72} or \cite{Fogarty:73}) and hence the connected components of $X^T$ are irreducible and smooth. For a closed point $x \in F_\beta \sub X^T$, we obtain an action of $T$ on the tangent space $T_xX$ by differentiating the action map. Hence $T_xX$ possesses a weight space decomposition $T_xX = \bigoplus_{\chi \in \Z^n} (T_xX)_\chi$. The invariant part $(T_xX)_0$ is isomorphic to $T_xF_\beta$.
	
	In \cite{BB:73}, Bia{\l}ynicki-Birula describes how a smooth projective variety $X$ decomposes when acted upon by a rank one torus $T = \G_m$. For a closed point $x \in X$ consider the morphism $\G_m \to X$ which is given by the action on $x$. As $X$ is projective, this morphism extends uniquely to a morphism $\P^1 \to X$ (here we identify $\G_m = \P^1-\{0,\infty\}$). We denote the value at $0$ by $\lim_{t\to 0} t.x$ and the value at $\infty$ by $\lim_{t\to \infty} t.x$. Note that both limit points are automatically  $\G_m$-fixed points. 
	Let $F \sub X^{\G_m}$ be a connected component. Define
	\begin{align*}
		\Att^+(F) &:= \{ x\in X \mid \lim_{t\to 0} t.x \in F \} \\
		\Att^-(F) &:= \{ x\in X \mid \lim_{t\to \infty} t.x \in F \}
	\end{align*}
	They are called the plus- and minus-attractors of $F$.
	Define the maps $\pi_F^{\pm}: \Att^{\pm}(F) \to F$ which send $x$ to $\pi_F^+(x) = \lim_{t\to 0} t.x$ and $\pi_F^-(x) = \lim_{t\to \infty} t.x$, respectively. Define for a fixed point $x \in F \sub X^{\G_m}$ the attractors of $x$ as $\Att^{\pm}(x) := (\pi_F^{\pm})^{-1}(x)$.
	
	\begin{thm}[{\cite[II,Thm.\ 4.2]{BCM:02}}] \label{t:bb}
		Let $X$ be a smooth projective $\G_m$-variety. Let $X^{\G_m} = \smash{\bigsqcup_{\beta \in B}} F_\beta$ be the decomposition into irreducible/connected components. For every index $\beta \in B$ let $\smash{X_\beta^{\pm}} := \smash{\Att^{\pm}(F_\beta)}$ and $\smash{\pi_\beta^{\pm}} = \smash{\pi_{F_\beta}^{\pm}}$.
		\begin{enumerate}
			\item Each $\smash{X_\beta^{\pm}}$ is locally closed, irreducible and smooth. The map $\smash{\pi_{\beta}^{\pm}}$ is a $\G_m$-invariant morphism.
			\item The union $\bigcup_{\beta \in B} X_\beta^+$ is disjoint and equals $X$. The same holds for $\bigcup_{\beta \in B} X_\beta^-$.
			\item There exists a total order $B = \{\beta_1,\ldots,\beta_N\}$ such that $\smash{X_{\beta_1}^+} \cup \ldots \cup \smash{X_{\beta_n}^+}$ is closed in $X$ for every $n=1,\ldots,N$. For this total order $\smash{X_{\beta_N}^-} \cup \ldots \cup \smash{X_{\beta_n}^-}$ is closed for $n=1,\ldots,N$.
			\item The morphism $\smash{\pi_\beta^+}: \smash{X_\beta^+} \to F_\beta$ is a Zariski locally trivial fibration whose fiber in the point $x \in F_\beta$ is an affine space of dimension $\sum_{m>0} \dim (T_xX)_m$. An analogous statement holds for $\smash{\pi_\beta^-}$.
		\end{enumerate}
	\end{thm}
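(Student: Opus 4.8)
The plan is to deduce the global statements from a purely local analysis around the fixed components, since the content of (1) and (4) is local on $X$ near $X^{\G_m}$, while (2) and (3) then follow from projectivity together with an equivariant polarisation. First I would establish a \emph{linear local model} near a fixed point. As $X$ is smooth and projective, Sumihiro's equivariant embedding theorem produces a $\G_m$-equivariant locally closed immersion $X \hookrightarrow \P(V)$ for a finite-dimensional $\G_m$-representation $V$, and in particular every fixed point $x$ has a $\G_m$-stable affine open neighbourhood $U$. Because $\G_m$ is linearly reductive, I would choose inside $k[U]$ a $\G_m$-stable vector space complement to $\mathfrak{m}_x^2$ in $\mathfrak{m}_x$; its basis consists of $\G_m$-semi-invariant functions vanishing at $x$, and since $X$ is smooth of dimension $n$ at $x$ it is a regular system of parameters, hence defines a $\G_m$-equivariant étale morphism $(U,x) \to (T_xX, 0)$ onto the tangent space with its linearised action. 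This is the technical heart of the argument.

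Granting the local model, the second step is an elementary computation on it, transported back. Writing $T_xX = \bigoplus_{m\in\Z}(T_xX)_m$, a vector $v = \sum_m v_m$ satisfies $t.v = \sum_m t^m v_m$, so $\lim_{t\to 0}t.v$ exists exactly when $v_m = 0$ for all $m<0$, in which case it equals $v_0 \in (T_xX)_0 \cong T_xF_\beta$. Hence in the model the plus-attractor of the fixed locus is the linear subspace $(T_xX)_{\ge 0}$, which is smooth, irreducible and locally closed; $\pi^+$ is the linear projection onto $(T_xX)_0$; and its fibre over the origin is the affine space $(T_xX)_{>0}$ of dimension $\sum_{m>0}\dim(T_xX)_m$. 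Since the étale morphism above is $\G_m$-equivariant it matches limits on both sides, so the same holds for $\Att^+(F_\beta)$ and $\pi_\beta^+$ near $x$; the statements for the minus-attractor follow by precomposing the action with the inversion $t\mapsto t^{-1}$. That $\lim_{t\to 0}t.x$ and $\lim_{t\to\infty}t.x$ are automatically fixed is immediate, as the extending map $\P^1\to X$ is equivariant for the scaling action and $0,\infty$ are $\G_m$-fixed. Covering $X^{\G_m}$ by finitely many such neighbourhoods and using that the $F_\beta$ are irreducible (as recalled before the theorem), I would conclude that each $X_\beta^\pm$ is smooth, irreducible and locally closed, and fibred over $F_\beta$ with the asserted affine fibres; a small extra argument, arranging the gluings of local trivialisations to be affine-linear along the fibres using the weight grading, upgrades this to Zariski-local triviality. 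This gives (1) and (4).

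For (2): given $x\in X$, projectivity forces the orbit morphism $\G_m\to X$ to extend to $\P^1\to X$, so $\lim_{t\to 0}t.x$ exists and lies in a unique component $F_\beta$; hence the $X_\beta^+$ partition $X$, and likewise the $X_\beta^-$. For (3): I would fix a $\G_m$-equivariant ample line bundle $L$ on $X$ (which exists after replacing a given ample bundle by a suitable tensor power) and attach to each $F_\beta$ the integer $\mu(\beta)$ by which $\G_m$ acts on the fibre of $L$ over $F_\beta$; ordering $B$ so that $\mu(\beta_1)\le\cdots\le\mu(\beta_N)$, a valuative/semicontinuity argument comparing $\mu$ at a point and at its $t\to 0$ limit along a one-parameter degeneration yields $\overline{X_\beta^+}\sub\bigcup_{\mu(\gamma)\le\mu(\beta)}X_\gamma^+$, so that $X_{\beta_1}^+\cup\cdots\cup X_{\beta_n}^+$ is closed for every $n$; the claim for the minus-attractors comes from the reversed order together with the inversion symmetry.

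I expect the main obstacle to be exactly the first step: producing the $\G_m$-equivariant étale linearisation near a fixed point, and on top of it checking that the resulting affine-space fibration is Zariski- rather than merely étale-locally trivial. Once the linear local model is in hand, everything else reduces either to the elementary weight-space computation above or to soft arguments using projectivity and an equivariant polarisation.
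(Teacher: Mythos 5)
The paper does not prove this theorem at all: it is imported verbatim with the reference \cite{BCM:02}, so there is no in-paper argument to compare yours against. Judged on its own, your sketch follows the classical Bia{\l}ynicki-Birula strategy -- Sumihiro's theorem plus linear reductivity to get an equivariant \'etale linearization near a fixed point, the explicit weight-space computation on the tangent space, and then globalization of (2) and (3) via projectivity and the weights of $\G_m$ on the fibres of an equivariant ample line bundle over the $F_\beta$ -- and the outline is sound; the arguments you give for (2) and for the closedness filtration in (3) are the standard ones and work as stated.

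The one place where you assert more than you prove is the sentence ``since the \'etale morphism is equivariant it matches limits on both sides.'' For a $\G_m$-equivariant \'etale map $f: (U,x) \to (T_xX,0)$ only one direction is formal: if $\lim_{t\to 0}t.u$ exists in $U$, then its image under $f$ is $\lim_{t\to 0}t.f(u)$. The converse -- that a point of $U$ mapping into $(T_xX)_{\geq 0}$ has a limit inside $U$, and that $f$ identifies the attractor in $U$ with the attractor of the linear model pulled back along $U^{\G_m} \to (T_xX)_0$ -- is a theorem, not an observation; one needs something like $U^+ \cong (T_xX)^+ \times_{(T_xX)_0} U^{\G_m}$ for equivariant \'etale $f$, and without it neither the smoothness and local closedness in (1) nor the fibre description in (4) follow. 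A related point: for $u$ in the global attractor $\Att^+(F_\beta)$ the limit is taken in the projective variety $X$ and may leave the affine chart $U$, so identifying $\Att^+(F_\beta)$ near $x$ with a piece of $f^{-1}((T_xX)_{\geq 0})$ requires first shrinking to $(\pi_\beta^+)^{-1}(F_\beta\cap U)$. You correctly flag the linearization and the Zariski-versus-\'etale triviality of the fibration as obstacles, but this descent-of-attractors step is the genuine content of the theorem and would have to be supplied; it is exactly what \cite{BB:73} and \cite{BCM:02} spend their effort on.
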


	As a consequence, we obtain a description of the Poincar\'{e} polynomial of $X$, when $X$ is a smooth projective complex variety of complex dimension $d$. The Poincar\'{e} polynomial of $X$ is defined as the polynomial $P_X(t) = \sum_{i=0}^{2d} (-1)^i \dim H^i(X;\Q) t^i$ whose coefficients are the signed dimensions of the Betti cohomology groups of $X$. By the above theorem, 
	$$
		P_X(t) = \sum_{\beta \in B} t^{2d_\beta} P_{F_\beta}(t), 
	$$
	where $d_\beta = \dim_\C \bigoplus_{m > 0} (T_xX)_m$ for any $x \in F_\beta$.
	
	To apply Theorem \ref{t:bb} to actions of a higher rank torus $T = \G_m^n$, it is possible to reduce it to the rank 1 case using a suitable one-parameter subgroup. Note that the fixed point locus decomposes into finitely many components $F_\beta$. For each two points $x,y$ in the same irreducible component $F_\beta$, the weight space decomposition of the tangent spaces $T_xX$ and $T_yX$ is identical.
	Let $C$ be the set of all characters $\chi \in \Z^n$ for which there exists $x \in X^T$ such that $(T_xX)_\chi \neq 0$. By the above arguments, $C$ is finite. If we choose a one-parameter subgroup $\lambda: \G_m \to T$ such that $\langle \lambda, \chi \rangle \neq 0$ for every $\chi \in C$ then we obtain $X^T = X^{\lambda(\G_m)}$ and also
	$$
		(T_xX)_+ = \bigoplus_{\langle \lambda,\chi \rangle \geq 0} (T_xX)_\chi.
	$$
	We also define $\Att_\lambda^+(y)$ and $\Att_\lambda^+(F)$ to be the attractor of a point $y \in X^T = X^{\lambda(\G_m)}$ resp.\ of  a closed subset $F \sub X^T$ with respect to the $\G_m$-action induced by $\lambda$. 
	
	Although pretty straight-forward, let us show how such a one-parameter subgroup $\lambda$ can be chosen in a convenient way. Consider again the set $C$ of characters which occur as a weight. Take the maximum norm $\|\chi\| = \max \{ |\chi_i| \mid i=1,\ldots,n\}$ on characters $\chi \in \Z^n$ and define $R := \max \{ \|\chi\| \mid \chi \in C \}$. Define $\lambda:\G_m \to T$ by $\lambda(z) = (z^{\lambda_1},\ldots,z^{\lambda_n})$ where $\lambda_i = (R+1)^{n-i}$. Then an easy verification shows
	
	\begin{lem} \label{l:1psg}
		\begin{enumerate}
			\item We get $\langle \lambda, \chi \rangle \neq 0$ for every $\chi \in C$; as a consequence $X^T = X^{\lambda(\G_m)}$.
			\item We have $\langle \lambda,\chi \rangle > 0$ if and only if $\chi \neq 0$ and $\chi_j > 0$, where $j = \min \{ i \in \{1,\ldots,n\} \mid \chi_i \neq 0 \}$.
		\end{enumerate}
	\end{lem}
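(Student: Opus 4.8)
The plan is to verify both assertions by a direct computation with the explicit choice $\lambda_i = (R+1)^{n-i}$, exploiting the fact that these exponents form a ``super-increasing'' sequence relative to the bound $R$ on the entries of characters in $C$. First I would fix a nonzero character $\chi \in \Z^n$ and let $j = \min\{i : \chi_i \neq 0\}$ be the index of its first nonzero coordinate. Then
$$
	\langle \lambda, \chi \rangle = \sum_{i=j}^{n} \chi_i (R+1)^{n-i} = (R+1)^{n-j}\Bigl( \chi_j + \sum_{i=j+1}^{n} \chi_i (R+1)^{j-i} \Bigr).
$$
I would argue that the sign of this quantity is the sign of $\chi_j$. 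For this, note that $|\chi_i| \le R$ for all $i$ when $\chi \in C$, so the ``tail'' $\sum_{i=j+1}^{n}\chi_i(R+1)^{j-i}$ is bounded in absolute value by $R \sum_{m \ge 1} (R+1)^{-m} = R \cdot \frac{1}{R} = 1$, and in fact strictly less than $1$ since the geometric series is finite. Hence $\chi_j + (\text{tail})$ has the same sign as the nonzero integer $\chi_j$, which proves part (2): $\langle \lambda, \chi\rangle > 0$ iff $\chi_j > 0$. In particular $\langle \lambda, \chi \rangle \neq 0$ for every nonzero $\chi$, and since every $\chi \in C$ is (by definition of $C$, via the nontriviality of the weight space) nonzero, this gives part (1).

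The identity $X^T = X^{\lambda(\G_m)}$ then follows formally: one always has $X^T \subseteq X^{\lambda(\G_m)}$, and conversely a point fixed by $\lambda(\G_m)$ has tangent weights $\chi$ all satisfying $\langle \lambda, \chi\rangle = 0$ on the normal directions, so by the nonvanishing just established all weights of $T_xX$ transverse to $X^{\lambda(\G_m)}$ vanish, forcing $x \in X^T$; alternatively, and more cleanly, one invokes the standard fact that for $X$ smooth a component of $X^{\lambda(\G_m)}$ on which $T$ acts must be pointwise $T$-fixed precisely when $\lambda$ pairs nontrivially with every weight $\chi \in C$ occurring in the tangent spaces along $X^T$, which is exactly condition (1).

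I do not anticipate a genuine obstacle here; the content is entirely the geometric-series estimate $R\sum_{m\ge 1}(R+1)^{-m} = 1$, which guarantees that the leading term $\chi_j(R+1)^{n-j}$ dominates. The only point requiring a word of care is the edge case $\chi = 0$, which is excluded from $C$, and the observation that the sum defining the tail is finite (running only up to $i = n$), so the bound is in fact strict, ensuring $\chi_j + (\text{tail}) \neq 0$ rather than merely $|\chi_j + (\text{tail})| \le$ something. This is why the statement says ``an easy verification''; I would present it as such, spelling out only the displayed estimate.
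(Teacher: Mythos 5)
Your proof is correct and is precisely the ``easy verification'' the paper alludes to (the paper gives no written proof of this lemma): the super-increasing choice $\lambda_i=(R+1)^{n-i}$ together with the geometric-series bound $R\sum_{m\ge 1}(R+1)^{-m}=1$ shows the leading term $\chi_j(R+1)^{n-j}$ determines the sign, and the deduction $X^T=X^{\lambda(\G_m)}$ from the nonvanishing of $\langle\lambda,\chi\rangle$ on $C$ is the standard fact the paper already invoked in the preceding paragraph. The only cosmetic point is the degenerate case $R=0$ (where the expression $R\cdot\tfrac1R$ is undefined but the tail is trivially zero), which does not affect the argument.
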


	\section{Torus Actions on Quotients}\label{s:torusquot}
	
	By Weist's Theorem \ref{t:thorsten}, we have an explicit description of the fixed point loci. To make Theorem \ref{t:bb} explicit in this context, the only missing ingredient is a closed formula for the dimensions of the weight spaces. In order to derive this, we first look at the following general setup.
	
	Suppose we are given a variety $Y$ over an algebraically closed field $k$ and actions of a reductive algebraic group $G$ and a torus $T$ which commute. Write $gy$ for the action of $g \in G$ on $y \in Y$ and $t.y$ for $t \in T$ acting on $y$. Assume there exists a principal $G$-fiber bundle $\pi: Y \to Y/G =: X$; in particular, the $G$-action on $Y$ is free. Then, by commutativity of the actions, the torus $T$ acts on the quotient $Y/G$, in such a way that the quotient map is $T$-equivariant.
	
	Let $x = \pi(y)$ be a $T$-fixed point of $X = Y/G$. That means there exists a homomorphism $\rho: T \to G$ such that $t.y = \rho(t)y$ for all $t \in T$. This induces a natural $T$-action on the tangent space $T_{\pi(y)}(Y/G)$ as follows. First of all, we observe that the tangent space $T_{\pi(y)}(Y/G)$ is naturally identified with
	$$
		T_yY/\im( d_e \act_G(\blank,y) ),
	$$
	where $\act_G: G \times Y \to Y$ is the action map and thus $d_e \act_G(\blank,y): \mathfrak{g} \to T_yY$; this is an injective linear map because the $G$-action is free. We have for every $t \in T$ a commutative diagram
	\begin{center}
		\begin{tikzpicture}[description/.style={fill=white,inner sep=2pt}]
			\matrix(m)[matrix of math nodes, row sep=1.5em, column sep=10em, text height=1.5ex, text depth=0.25ex]
			{
				T_yY		&  T_{t.y}Y	& T_yY\\
				\mathfrak{g}	& \mathfrak{g}	& \mathfrak{g} \\
			};
			\path[->, font=\scriptsize]
			(m-1-1) edge node[auto] {$d_y\act_T(t,\blank)$} 	(m-1-2)
			(m-1-2) edge node[auto] {$d_{t.y}\act_G(\rho(t)^{-1},\blank)$} 	(m-1-3)
			(m-2-1) edge node[auto] {$d_e\act_G(\blank,y)$}		(m-1-1)
			(m-2-2) edge node[auto] {$d_e\act_G(\blank,t.y)$}		(m-1-2)
			(m-2-3) edge node[auto] {$d_e\act_G(\blank,y)$}		(m-1-3)
			(m-2-1) edge node[auto] {$\id$}		(m-2-2)
			(m-2-2) edge node[auto] {$\mathrm{Ad}_{\rho(t)^{-1}}$}		(m-2-3)
			;
		\end{tikzpicture}
	\end{center}
	so in total, $T$ acts on $T_yY$ via $d_y(y' \mapsto \rho(t)^{-1}t.y')$ --- observe that this is indeed a group action --- and the restriction to the image of $d_e\act_G(\blank,y): \mathfrak{g} \into T_yY$ is just $\mathrm{Ad}_{\rho(t)^{-1}}$. The induced action on the quotient is the natural action on $T_{\pi(y)}(Y/G)$.
	
	We turn to an important special case. Consider commuting linear actions of $G$ and $T$ on a finite-dimensional vector space $V$ and let $Y \sub V$ be an open, $G \times T$-invariant subset. In this case $T_yY \cong V$ and via this identification, we have
	\begin{align*}
		d_y\act_T(t,\blank)&: V \to V,\ v \mapsto t.v \\
		d_y\act_G(g,\blank)&: V \to V,\ v \mapsto gv.
	\end{align*}
	The commutative diagram which describes the $T$-action on $T_{\pi(y)}(Y/G) \cong V/\im(d_e\act_G(\blank,y))$ is thus the following:
	\begin{center}
		\begin{tikzpicture}[description/.style={fill=white,inner sep=2pt}]
			\matrix(m)[matrix of math nodes, row sep=1.5em, column sep=10em, text height=1.5ex, text depth=0.25ex]
			{
				V		&  V	\\
				\mathfrak{g}	& \mathfrak{g} \\
			};
			\path[->, font=\scriptsize]
			(m-1-1) edge node[auto] {$v \mapsto \rho(t)^{-1}t.v$} 	(m-1-2)
			(m-2-1) edge node[auto] {$d_e\act_G(\blank,y)$}	(m-1-1)
			(m-2-2) edge node[right] {$d_e\act_G(\blank,y)$}	(m-1-2)
			(m-2-1) edge node[auto] {$\mathrm{Ad}_{\rho(t)^{-1}}$}	(m-2-2)
			;
		\end{tikzpicture}
	\end{center}
	
	Let $\chi$ be a character of $T$. Denote by $V_\chi$ and $\mathfrak{g}_\chi$ the $\chi$-weight spaces with respect to the actions given by $v \mapsto \rho(t)^{-1}t.v$ and $\mathrm{Ad}_{\rho(t)^{-1}}$, respectively. Then, by the above discussion, we see that
	$$
		T_{\pi(y)}(Y/G)_\chi \cong \coker(\mathfrak{g}_\chi \into V_\chi). 
	$$

	\section{Computation of the Weight Spaces} \label{s:wtsp}
	
	We now return to the case of the action of a torus on a moduli space of stable quiver representations. Let $Q$ be a quiver, $d$ a dimension vector and $\theta$ a stability condition. In the notation of the previous section, we take $V = R(Q,d)$, $Y = R(Q,d)^{\theta-\st}$, $X = M^{\theta-\st}(Q,d)$, $G = PG_d$, and $T = \G_m^n$ which acts via the characters $w_a$ as described in Section \ref{s:torusActions}.
	
	Let us analyze the action on the tangent space to a $T$-fixed point $[M]$. We have $\g = \gl_d^0 = \{ (x_i)_i \in \bigoplus_i \gl_{d_i} \mid \sum_i \tr(x_i) = 0 \}$ and
	\begin{align*}
		d_e \act_G(\blank,M): \g \to R(Q,d),\ x \mapsto [x,M] = (x_jM_a - M_ax_i)_{a:i \to j}.
	\end{align*}
	Suggestively, we are going to denote $[\blank,M] := d_e \act_G(\blank,M)$ and $[U,M] := \{ N \mid N = [x,M] \text{ for some } x \in U \}$ for a linear subspace $U \sub \mathfrak{g}$.
	Hence $T_{[M]}M^{\theta-\st}(Q,d) = R(Q,d)/[\g,M]$. 
	Decompose
	\begin{align*}
		R(Q,d) &= \bigoplus_{a \in Q_1} \bigoplus_{\xi \in \Z^n} \bigoplus_{\eta \in \Z^n} \Hom(V_{i,\xi},V_{j,\eta}) \\
		\mathfrak{g} &= \Big(\bigoplus_{i \in Q_0} \bigoplus_{\xi \in \Z^n} \bigoplus_{\xi' \in \Z^n} \Hom(V_{i,\xi},V_{i,\xi'})\Big)^0 \\
		&= \Big\{ (x_{i,\xi',\xi})_{i,\xi',\xi} \in \bigoplus_{i \in Q_0} \bigoplus_{\xi \in \Z^n} \bigoplus_{\xi' \in \Z^n} \Hom(V_{i,\xi},V_{i,\xi'}) \mid \sum_i \sum_\xi \tr(x_{i,\xi,\xi}) = 0 \Big\}
	\end{align*}
	In the second equation, the zero in the exponent stands for vanishing total trace; note that for $\xi \neq \xi'$, this is automatic as all entries are off-diagonal.
	Let $N_a = \sum_{\xi,\eta} N_{a,\eta,\xi}$ with $N_{a,\eta,\xi}:V_{i,\xi} \to V_{j,\eta}$. Then we have for $v = \sum_\xi v_{\xi} \in V_i$
	\begin{align*}
		\left( w_a(t)\rho_j(t)^{-1}N_a\rho_i(t) \right)(v) &= \sum_\xi w_a(t)\rho_j(t)^{-1}N_a(\xi(t)v_\xi) \\
		&= \sum_{\xi,\eta} w_a(t)\xi(t)\rho_j(t)^{-1}N_{a,\eta,\xi}(v_\xi) \\
		&= \sum_{\xi,\eta} w_a(t)\xi(t)\eta(t)^{-1}N_{a,\eta,\xi}(v_\xi) \\
		&= \sum_{\xi,\eta} (\xi - \eta + w_a)(t) N_{a,\eta,\xi}(v_\xi)
	\end{align*}
	and, similarly, for $x_i = \sum_{\xi,\xi'} x_{i,\xi',\xi} \in \mathfrak{gl}(V_i)$,
	\begin{align*}
		(\rho_i(t)^{-1}x_i\rho_i(t))(v) &= \sum_\xi \rho_i(t)^{-1}x_i(\xi(t)v_\xi) \\
		&= \sum_{\xi,\xi'} (\xi-\xi')(t)x_{i,\xi',\xi}(v_\xi).
	\end{align*}
	We have proved that $\smash{(T_{[M]}M^{\theta-\st}(Q,d))_\chi}$ coincides with 
	$$
		\coker \bigg( \Big( \bigoplus_{i \in Q_0} \bigoplus_{\xi \in \Z^n} \Hom(V_{i,\xi},V_{i,\xi-\chi}) \Big)^0 \into \Big( \bigoplus_{a \in Q_1} \bigoplus_{\xi \in \Z^n} \Hom(V_{s(a),\xi},V_{t(a),\xi+w_a-\chi}) \Big) \bigg).
	$$
	From this we can read off at once the dimension of the $\chi$-weight spaces. It is given in terms of the Euler form of $Q(w)$ by the following formula:
	\begin{align*}
		\dim  \big( T_{[M]}M^{\theta-\st}(Q,d) \big)_\chi &= \sum_{a \in Q_1} \sum_\xi \beta_{s(a),\xi}\beta_{t(a),\xi+w_a-\chi} - \sum_{i \in Q_0} \sum_\xi \beta_{i,\xi}\beta_{i,\xi-\chi} + \delta(\chi,0) \\
		&= \delta(\chi,0) - \langle \beta,s_{-\chi}(\beta) \rangle_{Q(w)}.
	\end{align*}
	
	Now recall that for any two representations $M$ and $N$ of a quiver $Q$ there exists an exact sequences
	$$
		0 \to \Hom_Q(M,N) \to \bigoplus_{i \in Q_0} \Hom(M_i,N_i) \xto{}{\phi_{M,N}} \bigoplus_{a \in Q_1} \Hom(M_{s(a)},N_{t(a)}) \to \Ext^1_Q(M,N) \to 0
	$$
	which arises from the standard projective resolution of $M$ (see \cite{Crawley-Boevey:92}).
	The map $\phi_{M,N}$ sends a tuple $(A_i)_i$ to $(A_{t(a)}M_a - N_aA_{s(a)})_a$. It is called the Happel--Ringel map. Let us consider the Happel--Ringel map for the quiver $Q(w)$ and a lift $N$ of $M$ to $Q(w)$ which corresponds to the choice of the weight space decompositions $V_i = \bigoplus_\chi V_{i,\chi}$. The Happel--Ringel map
	$$
		\phi_{N,s_{-\chi}(N)}: \bigoplus_{i \in Q_0} \bigoplus_{\xi \in \Z^n} \Hom(V_{i,\xi},V_{i,\xi-\chi}) \to \bigoplus_{a \in Q_1} \bigoplus_{\xi \in \Z^n} \Hom(V_{s(a),\xi},V_{t(a),\xi+w_a-\chi})
	$$
	restricted to tuples with vanishing total trace equals the restriction of the map $[\blank,M]$ to the weight spaces of weight $\chi$. This shows that $\smash{(T_{[M]}M^{\theta-\st}(Q,d))_\chi}$ is isomorphic to the extension group $\Ext_{Q(w)}^1(N,s_{-\chi}(N))$. We summarize our findings in the following
	
	\begin{thm} \label{t:wtsp}
		Let $[M]$ be a $T$-fixed point of $M^{\theta-\st}(Q,d)$ and let $N$ be a lift of $M$ to $Q(w)$.
		The $\chi$-weight space of the tangent space of $M^{\theta-\st}(Q,d)$ to $[M]$ is isomorphic to
		$$
			(T_{[M]}M^{\theta-\st}(Q,d))_\chi \cong \Ext_{Q(w)}^1(N,s_{-\chi}(N)).
		$$
		With $\beta = \dimvect N$ we obtain
		\begin{align*}
			\dim  \big( T_{[M]}M^{\theta-\st}(Q,d) \big)_\chi &= \delta(\chi,0) - \langle \beta,s_{-\chi}(\beta) \rangle_{Q(w)}.
		\end{align*}
	\end{thm}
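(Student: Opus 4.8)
The plan is to deduce both assertions from the cokernel description of weight spaces obtained at the close of Section~\ref{s:torusquot}. Applying that discussion with $V = R(Q,d)$, $Y = R(Q,d)^{\theta-\st}$, $G = PG_d$ and $T = \G_m^n$ acting through the characters $w_a$, a $T$-fixed point $[M]$ is represented by a point $M$ together with a homomorphism $\rho\colon T \to PG_d$ such that $t.M = \rho(t)M$; lifting $\rho$ to $\dot\rho\colon T \to G_d$, as recalled in Section~\ref{s:torusActions}, is the same as fixing the weight space decompositions $V_i = \bigoplus_\chi V_{i,\chi}$, i.e.\ the lift $N$ of $M$ to $Q(w)$. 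With this data in hand, Section~\ref{s:torusquot} provides a natural isomorphism
$$
  \big(T_{[M]}M^{\theta-\st}(Q,d)\big)_\chi \;\cong\; \coker\big(\g_\chi \hookrightarrow R(Q,d)_\chi\big),
$$
where $\g_\chi$ and $R(Q,d)_\chi$ are the $\chi$-weight spaces of the twisted actions $x \mapsto \mathrm{Ad}_{\rho(t)^{-1}}x$ on $\g = \gl_d^0$ and $v \mapsto \rho(t)^{-1}t.v$ on $R(Q,d)$, and the map is the restriction of $[\blank,M]$; it is injective because the $PG_d$-action is free.

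The first step is the weight-space bookkeeping. Writing elements of $R(Q,d)$ and of $\gl_d$ in block form with respect to the decompositions $V_i = \bigoplus_\xi V_{i,\xi}$ and using that $\dot\rho_i(t)$ acts on $V_{i,\xi}$ by the scalar $\xi(t)$ while $M_a$ maps $V_{i,\xi}$ into $V_{j,\xi+w_a}$, one computes that the block $V_{s(a),\xi} \to V_{t(a),\eta}$ of $R(Q,d)$ sits in the weight-$(\xi-\eta+w_a)$ eigenspace and the block $V_{i,\xi} \to V_{i,\xi'}$ of $\gl(V_i)$ in the weight-$(\xi-\xi')$ eigenspace. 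Collecting the blocks of a fixed weight $\chi$ gives
$$
  R(Q,d)_\chi = \bigoplus_{a \in Q_1}\bigoplus_{\xi}\Hom\big(V_{s(a),\xi},V_{t(a),\xi+w_a-\chi}\big),
  \qquad
  \g_\chi = \Big(\bigoplus_{i \in Q_0}\bigoplus_{\xi}\Hom\big(V_{i,\xi},V_{i,\xi-\chi}\big)\Big)^0,
$$
the superscript $0$ denoting vanishing total trace; for $\chi \neq 0$ every summand of $\g_\chi$ is off-diagonal, so that condition is vacuous and $\g_\chi$ is the full direct sum.

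The heart of the argument is to recognise the map. Under the evident identification of $\bigoplus_{i,\xi}\Hom(V_{i,\xi},V_{i,\xi-\chi})$ with $\bigoplus_{(i,\xi) \in Q(w)_0}\Hom(N_{i,\xi},s_{-\chi}(N)_{i,\xi})$, one checks block by block that the restriction of $[\blank,M]$ to $\g_\chi$ is exactly the Happel--Ringel map $\phi_{N,s_{-\chi}(N)}$ of the quiver $Q(w)$, restricted to the vanishing-trace subspace; the source and target indices in the two descriptions of $R(Q,d)_\chi$ match precisely because the arrow $(a,\xi)$ of $Q(w)$ runs from $(s(a),\xi)$ to $(t(a),\xi+w_a)$. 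Plugging this into the exact sequence recalled above, applied to $N$ and $s_{-\chi}(N)$ in place of $M$ and $N$ (legitimate since both are finite-dimensional), identifies $\coker\phi_{N,s_{-\chi}(N)}$ with $\Ext^1_{Q(w)}(N,s_{-\chi}(N))$. For $\chi \neq 0$ this cokernel is literally $\coker(\g_\chi \hookrightarrow R(Q,d)_\chi)$, which proves the isomorphism. For $\chi = 0$ one has $\g_0 = \gl_d^0$, of codimension one in $\gl_d$, the missing direction being the scalar line $\Delta = k\cdot(\mathrm{id}_{V_i})_i \subseteq \Hom_{Q(w)}(N,N) = \ker\phi_{N,N}$; hence $\phi_{N,N}$ and its restriction to $\gl_d^0$ have the same image and the two cokernels still coincide, so the isomorphism holds for every $\chi$. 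Finally, injectivity of $[\blank,M]|_{\g_\chi}$ yields $\dim(T_{[M]}M^{\theta-\st}(Q,d))_\chi = \dim R(Q,d)_\chi - \dim \g_\chi$, which with $\beta = \dimvect N$ equals $\sum_{a}\sum_\xi \beta_{s(a),\xi}\beta_{t(a),\xi+w_a-\chi} - \sum_{i}\sum_\xi \beta_{i,\xi}\beta_{i,\xi-\chi} + \delta(\chi,0)$; recognising the first two sums as the two contributions of the Euler form $\langle\beta,s_{-\chi}(\beta)\rangle_{Q(w)}$ of $Q(w)$, and noting $s_{-\chi}(\beta) = \dimvect s_{-\chi}(N)$, rewrites this as $\delta(\chi,0) - \langle\beta,s_{-\chi}(\beta)\rangle_{Q(w)}$.

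I expect the only genuine difficulty to be organisational rather than conceptual: one has to fix the indexing conventions for $Q(w)$, the shift functors $s_\chi$, and the twisted torus action consistently enough that the block-by-block comparison of $[\blank,M]$ with the Happel--Ringel map is transparent, and one must treat the single weight $\chi = 0$ with care, where the vanishing-trace condition produces the $\delta(\chi,0)$ correction and where one uses that the scalar line lies in $\ker\phi_{N,N}$.
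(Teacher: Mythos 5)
Your proposal is correct and follows essentially the same route as the paper: the cokernel description from the quotient section, block-by-block weight bookkeeping, identification of the restricted commutator map with the Happel--Ringel map $\phi_{N,s_{-\chi}(N)}$ of $Q(w)$, and the resulting Euler-form dimension count with the $\delta(\chi,0)$ correction from the trace condition. Your explicit check that for $\chi=0$ the scalar line lies in $\ker\phi_{N,N}$, so that passing to the trace-zero subspace does not change the image, is a point the paper leaves implicit, but it is the same argument.
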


	\section{Actions of Rank 1 Tori on Smooth Projective Quiver Moduli}\label{s:attr}
	
	Let $Q$ be an acyclic quiver, $d$ a dimension vector, and $\theta$ be a stability condition such that $d$ is $\theta$-coprime. Let $\G_m$ act on $M^\theta(Q,d)$ by weights $w_a \in \Z$. Let $Q(w)$ be the corresponding covering quiver. Later, in the relevant examples we will take a $\G_m$-action which is given by weights $w_{a_1} \gg \ldots \gg w_{a_N} > 0$ for a given order $\{a_1,\ldots,a_N\} = Q_1$ on the arrows.

	First we state an immediate consequence of the description of the dimensions of the weight spaces in Theorem \ref{t:wtsp}.
	
	\begin{cor} \label{c:poincare}
		Let $k = \C$.
		The Poincar\'{e} polynomial of $M^\theta(Q,d)$ equals
		$$
			P_{M^\theta(Q,d)}(t) = \sum_\beta t^{2d_\beta} P_{M^\theta(Q(w),\beta)}(t),
		$$
		the sum ranging over all equivalence classes of dimension vectors $\beta$ of $Q(w)$ which are compatible with $d$. The exponent $d_\beta$ is determined by the formula
		$$
			d_\beta = - \sum_{n > 0} \langle \beta,s_{-n}(\beta) \rangle_{Q(w)}.
		$$
	\end{cor}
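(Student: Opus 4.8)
The plan is to combine the three structural results assembled so far: the Bia{\l}ynicki-Birula decomposition of Theorem \ref{t:bb} together with its consequence for Poincar\'{e} polynomials, Weist's identification of the fixed point components in Theorem \ref{t:thorsten}, and the weight space dimension formula of Theorem \ref{t:wtsp}. Since $d$ is $\theta$-coprime, $M^\theta(Q,d) = M^{\theta-\st}(Q,d)$ is smooth and projective, and since $T = \G_m$ already has rank one, Theorem \ref{t:bb} applies directly, with no passage to a one-parameter subgroup required. The consequence recorded after Theorem \ref{t:bb} then gives
$$
	P_{M^\theta(Q,d)}(t) = \sum_{\beta} t^{2 d_\beta} P_{F_\beta}(t), \qquad d_\beta = \dim_\C \bigoplus_{m > 0} \big( T_{[M]} M^\theta(Q,d) \big)_m,
$$
for any $[M] \in F_\beta$, where $F_\beta$ runs over the connected components of the fixed locus $M^\theta(Q,d)^{\G_m}$.

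Next I would substitute for the two quantities on the right-hand side. By Theorem \ref{t:thorsten} the components $F_\beta$ are indexed by equivalence classes of dimension vectors $\beta$ of $Q(w)$ compatible with $d$, and $F_\beta \cong M^{\hat{\theta}-\st}(Q(w),\beta)$; since $d$ is $\theta$-coprime one checks that every such $\beta$ is $\hat{\theta}$-coprime, so that $M^{\hat{\theta}-\st}(Q(w),\beta) = M^{\hat{\theta}}(Q(w),\beta)$, which is the variety denoted $M^\theta(Q(w),\beta)$. Each $F_\beta$ is smooth, being a component of the fixed locus of a torus acting on a smooth variety, and projective, being a closed subvariety of $M^\theta(Q,d)$, so its Poincar\'{e} polynomial is well defined and equals $P_{M^\theta(Q(w),\beta)}(t)$. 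For the exponent I would invoke Theorem \ref{t:wtsp} with the lift $N$ of $M$ determined by $\beta$: for every integer $m \neq 0$ we have $\delta(m,0) = 0$, hence $\dim \big( T_{[M]} M^\theta(Q,d) \big)_m = - \langle \beta, s_{-m}(\beta) \rangle_{Q(w)}$, and summing over $m > 0$ yields $d_\beta = - \sum_{m > 0} \langle \beta, s_{-m}(\beta) \rangle_{Q(w)}$. Substituting both identities into the displayed formula completes the proof.

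There is no genuine obstacle, as the corollary is a formal consequence of results already in hand; only a few bookkeeping points require care. One must note that the sum defining $d_\beta$ is finite, which follows from finiteness of the set $C$ of occurring weights observed in Section \ref{s:bialynicki}; that the index set of the Bia{\l}ynicki-Birula decomposition coincides with the one in Theorem \ref{t:thorsten}, so the sum over connected components of the fixed locus is precisely the sum over equivalence classes of compatible dimension vectors; and the claimed $\hat{\theta}$-coprimality of $\beta$, which holds because $c(\beta') \leq c(\beta) = d$ for every subdimension vector $\beta' \leq \beta$ and $\theta$-coprimality of $d$ rules out any intermediate slope. With these remarks in place the identification is immediate.
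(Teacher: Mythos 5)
Your proof is correct and follows exactly the route the paper intends: the paper presents this corollary without a separate proof, calling it an immediate consequence of the Poincar\'{e} polynomial formula stated after Theorem \ref{t:bb}, combined with Weist's Theorem \ref{t:thorsten} and the weight space formula of Theorem \ref{t:wtsp}, which is precisely the chain of substitutions you carry out. Your additional bookkeeping remarks (finiteness of the sum, matching of index sets, and the $\hat{\theta}$-coprimality of $\beta$ justifying the notation $M^\theta(Q(w),\beta)$) are all sound and fill in details the paper leaves implicit.
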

	
	Our next goal is to give an explicit description of the attractors as subsets of $M^\theta(Q,d)$. For this we need to introduce a piece of notation. Let $N \in R(Q,d)$. Assume that for every $i \in Q_0$, there exists a filtration 
	$$
		\ldots \sub F_{i,n} \sub F_{i,n+1} \sub \ldots \sub V_i
	$$ 
	in such a way that $N_a(F_{i,n}) \sub F_{j,n+w_a}$ for every arrow $a: i \to j$. Such a collection $F_*$ of filtrations will be called an $w$-twisted filtration of $N$. Note that $F_*$ is not a filtration of $N$ by subrepresentations. But the induced maps $\bar{N}_a: F_{i,n}/F_{i,n-1} \to F_{j,n+w_a}/F_{j,n+w_a-1}$ give a representation of $Q(w)$ which we denote $\smash{\gr^{F_*}}(N)$.

	\begin{prop}
		Let $[M] \in M^\theta(Q,d)^{\G_m}$ and $[N] \in  M^\theta(Q,d)$. Then $[N] \in\Att^+([M])$ if and only if there is exists an $w$-twisted filtration $F_*$ of $N$ such that $\smash{\gr^{F_*}}(N) \cong M$ as representations of $Q$.
	\end{prop}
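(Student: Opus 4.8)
The plan is to lift the question to $R(Q,d)^\theta$, where $M^\theta(Q,d)=R(Q,d)^\theta/PG_d$, and to recognise a $w$-twisted filtration of $N$ as the datum of a one-parameter subgroup of $G_d$ along which $N$ degenerates. I will use the following dictionary, which is the static counterpart of the block computation in Section~\ref{s:wtsp}: a cocharacter $\lambda\colon\G_m\to G_d$ amounts to a $\Z$-grading $V_i=\bigoplus_n V_{i,n}$ (with $\lambda_i(t)$ acting by $t^n$ on $V_{i,n}$); for $N'\in R(Q,d)$ the limit $\lim_{t\to 0}\lambda(t)^{-1}\cdot(t.N')$ exists in $R(Q,d)$ if and only if $N'$ respects the increasing filtration $F_{i,k}=\bigoplus_{\ell\le k}V_{i,\ell}$ in the twisted sense $N'_a(F_{s(a),k})\sub F_{t(a),k+w_a}$, and in that case the limit equals $c(\gr^{F_*}(N'))$. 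I also record a general principle: if $N'\in R(Q,d)^\theta$ and $\lim_{t\to 0}\lambda(t)^{-1}\cdot(t.N')$ exists in $R(Q,d)^\theta$, then $\lim_{t\to 0}t.[N']=\bigl[\lim_{t\to 0}\lambda(t)^{-1}\cdot(t.N')\bigr]$; indeed $t\mapsto\lambda(t)^{-1}\cdot(t.N')$ is a morphism $\G_m\to R(Q,d)^\theta$ which extends over $0$, it satisfies $[\lambda(t)^{-1}\cdot(t.N')]=t.[N']$ for $t\ne 0$, and the extension $\P^1\to M^\theta(Q,d)$ of the orbit map is unique because $M^\theta(Q,d)$ is separated.

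For the ``if'' direction, given a $w$-twisted filtration $F_*$ of $N$ with $c(\gr^{F_*}(N))\cong M$, I split the filtration on each $V_i$ into a grading, obtaining a cocharacter $\lambda$; by the dictionary $\lim_{t\to 0}\lambda(t)^{-1}\cdot(t.N)$ exists and equals $c(\gr^{F_*}(N))\cong M$. Since $M$ is $\theta$-stable this limit lies in $R(Q,d)^\theta$, and the general principle gives $\lim_{t\to 0}t.[N]=[M]$, i.e.\ $[N]\in\Att^+([M])$.

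For the ``only if'' direction, assume $[N]\in\Att^+([M])$. As $M^\theta(Q,d)$ is projective, the orbit map extends to a morphism $f\colon\mathbb{A}^1\to M^\theta(Q,d)$ with $f(0)=[M]$, where $\G_m$ acts on $\mathbb{A}^1$ by scaling. Pulling back the $PG_d$-torsor $\pi\colon R(Q,d)^\theta\to M^\theta(Q,d)$ along $f$ yields a $\G_m$-equivariant $PG_d$-torsor $E\to\mathbb{A}^1$. Because this $\G_m$-action contracts $\mathbb{A}^1$ onto the fixed point $0$, and because $PG_d$ is connected (so every $PG_d$-torsor over a point, or over $\mathbb{A}^1_k$, is trivial), $E$ is equivariantly isomorphic to the pullback of its fibre $E_0$ over $0$; the $\G_m$-action on $E_0$ is through a cocharacter $\rho\colon\G_m\to PG_d$, and the resulting section of $E$ produces a morphism $\tilde\sigma\colon\mathbb{A}^1\to R(Q,d)^\theta$ with $\pi\tilde\sigma=f$ and
$$
  \tilde\sigma(t)=\rho(t)^{-1}\cdot\bigl(t.(g_1\cdot N)\bigr)\qquad (t\ne 0),
$$
where $g_1\in PG_d$ is fixed. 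This morphism extends over $0$ to a value $M_0$ with $[M_0]=f(0)=[M]$, so $M_0\cong M$. Now $\rho$ lifts to a cocharacter $\lambda\colon\G_m\to G_d$ and $g_1$ lifts to $\dot{g}_1\in G_d$; putting $N':=\dot{g}_1\cdot N$ (so $[N']=[N]$) we have found a cocharacter $\lambda$ with $\lim_{t\to 0}\lambda(t)^{-1}\cdot(t.N')=M_0\cong M$ existing in $R(Q,d)$, hence $\theta$-stable. The dictionary, read in reverse, turns the $\lambda$-grading into a $w$-twisted filtration $F'_*$ of $N'$ with $c(\gr^{F'_*}(N'))=M_0\cong M$. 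Transporting $F'_*$ along $\dot{g}_1$, i.e.\ setting $F_{i,k}:=(\dot{g}_1)_i^{-1}(F'_{i,k})$, gives a $w$-twisted filtration of $N$ itself with $c(\gr^{F_*}(N))\cong M$, which completes the argument.

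The delicate point is the ``only if'' direction, namely the step that upgrades the soft statement $\lim_{t\to 0}t.[N]=[M]$ to an honest one-parameter subgroup of $G_d$ together with a \emph{$\theta$-stable} limit. This is where projectivity of $M^\theta(Q,d)$ is used (through the valuative criterion, to produce $f$), and where one needs that a $\G_m$-equivariant torsor over the contracted line is a pullback from the fixed point --- which is precisely what forces the comparison datum $g(t)$ with $\tilde\sigma(t)=g(t)\cdot(t.N)$ to have the form $\rho(t)^{-1}g_1$ with $\rho$ a cocharacter. A more elementary variant replaces $\mathbb{A}^1$ by $\Spec k[[t]]$, compares $\tilde\sigma$ with the tautological $k((t))$-family $t.N$, and extracts $\lambda$ from the Cartan decomposition in $G_d(k((t)))=\prod_i\GL_{d_i}(k((t)))$; that variant requires an additional argument to see that the resulting degeneration stays $\theta$-stable, which the equivariant formulation delivers for free.
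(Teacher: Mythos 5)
Your overall strategy coincides with the paper's: in both directions the problem is reduced to finding a cocharacter of $G_d$ along which a representative of $N$ degenerates, inside $R(Q,d)^\theta$, to a representative of $M$, and then translating the associated grading into a $w$-twisted filtration. Your ``if'' direction is essentially identical to the paper's (split $F_*$ into a grading, form the corresponding one-parameter subgroup, compute $\psi(t)^{-1}\cdot t.N$ blockwise and observe that the limit is $\gr^{F_*}(N)$, which is stable because it is isomorphic to $M$). The genuine difference is in the ``only if'' direction: the paper simply invokes \cite[Prop.\ 5.11]{KW:19}, a packaged consequence of the Hilbert--Mumford criterion, to produce $g\in G_d$ and a lift $\dot{\rho}$ with $\lim_{t\to 0}\dot{\rho}(t)^{-1}\cdot t.gN=M$, whereas you re-derive this by pulling the $PG_d$-torsor $R(Q,d)^\theta\to M^\theta(Q,d)$ back along $f\colon\A^1\to M^\theta(Q,d)$ and appealing to equivariant triviality of the resulting $\G_m$-equivariant torsor over $\A^1$. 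That formulation is appealing because, as you point out, it keeps the entire degeneration inside the stable locus with no extra argument.

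The step you must not wave at, however, is exactly the equivariant triviality of $E\to\A^1$. The reason you offer --- $PG_d$ is connected, hence torsors over a point or over $\A^1$ are trivial --- only yields a \emph{non-equivariant} trivialization. Comparing such a trivialization with the tautological equivariant one over $\G_m\sub\A^1$ produces a transition function in $PG_d(k[t,t^{-1}])$, and the claim that it can be normalized to the form $\rho(t)^{-1}g_1$ with $\rho$ a cocharacter is precisely the Iwahori/Cartan decomposition --- the same non-formal input that the Hilbert--Mumford criterion encodes and that the paper outsources to \cite{KW:19}. You acknowledge this in your closing paragraph, so the issue is one of justification rather than of substance, but as written the ``only if'' direction silently assumes the statement it is meant to replace; either cite the classification of $\G_m$-equivariant $G$-torsors on $\A^1$ by conjugacy classes of cocharacters, or fall back on the Hilbert--Mumford argument directly. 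Two minor points to record explicitly: a cocharacter of $PG_d$ does lift to $G_d$ (the paper notes this when introducing $\dot{\rho}$), and the filtration you extract at the end is exhaustive and separated, so that $\gr^{F_*}(N)$ really has dimension vector compatible with $d$.
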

	
	
	\begin{proof}
		Assume that there exists an $w$-twisted filtration $F_*$ such that $\gr^{F_*}N \cong M$. We have to show $\lim_{t\rightarrow 0} t.[N] = [M]$. Abbreviate $V_{i,n}:=F_{i,n}/F_{i,n-1}$. Let $\psi: \G_m \to G_d$ be the one-parameter subgroup which corresponds to the decomposition $V_i = \bigoplus_{n\in\Z} V_{i,n}$; concretely this means for $v_i = \sum_n v_{i,n}$ with $v_{i,n} \in V_{i,n}$ that $\psi_i(t)v_i = \sum_n t^nv_{i,n}$.
		For an arrow $a:i \to j$ and $m, n \in \Z$ we define $N_{a,m,n}: V_{i,n}\rightarrow V_{j,m}$ as the linear map such that $N_a(v_i) = \smash{\sum_{m,n}} N_{a,m,n}(v_{i,n})$ for $v_i = \smash{\sum_n} v_{i,n}$ with $v_{i,n} \in V_{i,n}$. Then $N_{a,m,n}=0$ whenever $m>n+w_a$ as $N_a(F_{i,n}) \sub F_{j,n+w_a}$. Now consider $\psi(t)^{-1}\cdot t.N$; it lies in the same $G_d$-orbit as $t.N$. Since for $v_{i,n} \in V_{i,n}$
		\begin{align*}
			(\psi(t)^{-1}\cdot t.N)_a(v_{i,n}) &= \psi_j(t)^{-1} t^{w_a}N_{\alpha}(\psi_i(t) v_{i,n})\\
			&= \psi_j(t)^{-1}\sum_m(t^{n+w_a}N_{\alpha,m,n}(v_{i,n}))\\
			&= \sum_m t^{n-m+w_a}N_{\alpha,m,n}(v_{i,n})
		\end{align*}
		These considerations show that $\lim_{t \to 0} t.[N]$ is represented by $\gr^{F_*}(N)$, considered as a representation of $Q$. But $\smash{\gr^{F_*}}(N) \cong M$.

		Assume conversely that $\lim_{t\rightarrow 0} t.[N] = [M]$. Let $\dot{\rho}:\G_m \to G_d$ be a lift of the homomorphism $\rho: \G_m \to PG_d$ which corresponds to $[M]$. An application of \cite[Prop.\ 5.11]{KW:19}, which is a consequence of Hilbert's criterion, shows that $\lim_{t \to 0} \dot{\rho}(t)^{-1}\cdot t.gN = M$ for some $g \in G_d$. Let $V_i = \bigoplus_{n \in \Z} V_{i,n}$ be the weight space decomposition induced by $\dot{\rho}$ and abbreviate $\smash{\tilde{N}} := gN$. Let $\smash{\tilde{N}_{a,m,n}}: V_{i,n} \to V_{j,m}$ be defined in the same vein as before. For $v_{i,n} \in V_{i,n}$ we obtain
		$$
			(\dot{\rho}(t)^{-1}\cdot t.\tilde{N})(v_{i,n}) = \sum_m t^{n-m+w_a}\tilde{N}_{a,m,n}(v_{i,n}).
		$$
		We define $M_{a,n}: V_{i,n} \to V_{j,n+w_a}$ as the restriction of $M_a$.
		As we know that this converges to $M$ as $t \to 0$, we deduce that $\smash{\tilde{N}_{a,m,n}} = 0$ whenever $m > n-w_a$ and that $\smash{\tilde{N}_{a,n+w_a,n}} = \smash{M_{a,n}}$. Setting $\smash{\tilde{F}_{i,n}} = \bigoplus_{m \leq n} V_{i,m}$ we see that $\smash{\tilde{F}_*}$ is an $w$-twisted filtration for $\smash{\tilde{N}} = gN$ such that $\smash{\gr^{\tilde{F}_*}(\tilde{N})} \cong M$. Hence $F_* := \smash{g^{-1}\tilde{F}_*}$ is the desired $w$-twisted filtration of $N$.
	\end{proof}
	
	\begin{rem}
		We would like to point out that the above proposition is essentially a reformulation of a Theorem \cite[Thm.\ 5.15]{KW:19} due to Kinser and Weist. They construct a space $V_T$ in the proof of the mentioned theorem which, if translated to the language of $w$-twisted flags, is just what we obtain.
	\end{rem}
	
	Let $[M] \in M^\theta(Q,d)$ be a fixed point and let $V_i = \bigoplus_n V_{i,n}$ be the the corresponding decompositions for $i \in Q_0$ and define the flag $F_*$ by $F_{i,n} = \bigoplus_{m \leq n} V_{i,m}$. For $N \in R(Q,d)$ define $N_{a,m,n}: V_{i,n} \to V_{j,m}$ as in the proof of the previous proposition. For $g \in G_d$ define $g_{i,m,n}: V_{i,n} \to V_{i,m}$ by $g_iv_{i,n} = \sum_m g_{i,m,n}v_{i,n}$ for $v_{i,n} \in V_{i,n}$. Set
	\begin{align*}
		R_{F_*}(M) &= \{ N \in R(Q,d) \mid N_{a,m,n} = 0 \text{ for } m > n+w_a \text{ and } N_{a,n+w_a,n} = M_{a,n} \text{ for all $a \in Q_1$} \} \\
		U_{F_*} &= \{ g \in G_d \mid g_{i,m,n}=0 \text{ for } m > n \text{ and } g_{i,n,n} = \id_{V_i} \text{ for all $i \in Q_0$} \}
	\end{align*}
	Note that $N \in R_{F_*}(M)$ is automatically stable.
	The group $\smash{U_{F_*}}$ is unipotent and connected, it is up to a factor of $\G_m$ the same group as in \cite[(5.26)]{KW:19}. It acts on $\smash{R_{F_*}(M)}$ with trivial stabilizers. The associated fiber bundle $G_d \times^{U_{F_*}} R_{F_*}(M)$ agrees with $\pi^{-1}(\Att^+([M]))$ by the above proposition. Thus $\Att^+([M]) \cong R_{F_*}(M)/U_{F_*}$ --- a principal $U_{F_*}$-fiber bundle. Theorem \ref{t:bb} tells us that $\Att^+([M])$ is a vector space, isomorphic to $(T_{[M]}M^\theta(Q,d))_+$. In particular, it is isomorphic to 
	$$
		T_{[M]}\Att^+([M]) \cong T_{[M]}(R_{F_*}(M)/U_{F_*}) \cong T_{M}(R_{F_*}(M))/\im d_e \act_{U_{F_*}}(\blank,M)
	$$
	Define 
	$$
		R_{F_*} = R_{F_*}(0) = \bigoplus_{a: i \to j} \bigoplus_{m < n+w_a} \Hom(V_{i,n},V_{j,m}). 
	$$
	Then $R_{F_*}(M) = \{M\} + R_{F_*}$, so $T_{M}(R_{F_*}(M)) = R_{F_*}$. The derivative $d_e \act_{U_{F_*}}(\blank,M):\mathfrak{u}_{F_*} \to R_{F_*}$ of the action map sends $x = (x_a)_{a \in Q_1}$ to $[x,M]$ (c.f.\ Section \ref{s:wtsp}). It is an injective map as the action is free. Hence
	$$
		T_{[M]}\Att^+([M]) \cong R_{F_*}/[\mathfrak{u}_{F_*},M].
	$$	
	The Lie algebra of $U_{F_*}$ is, explicitly, 
	$$
		\mathfrak{u}_{F_*} = \bigoplus_{i \in Q_0} \bigoplus_{m < n} \Hom(V_{i,n},V_{i,m}).
	$$
	The derivative of the action map is compatible with the weight space decompositions $\mathfrak{u}_{F_*} = \bigoplus_{k > 0} \mathfrak{u}_{F_*,k}$ and $R_{F_*} = \bigoplus_{k > 0} R_{F_*,k}$ where
	\begin{align*}
		\mathfrak{u}_{F_*,k} &= \bigoplus_{i \in Q_0} \bigoplus_{n \in \Z} \Hom(V_{i,n},V_{i,n-k}) &
		R_{F_*,k} &= \bigoplus_{a: i \to j} \bigoplus_{n \in \Z} \Hom(V_{i,n},V_{j,n+w_a-k}).
	\end{align*}

	\begin{thm} \label{t:att_explicit}
		Let $[M] \in M^\theta(Q,d)^{\G_m}$, let $V_i = \bigoplus_{n \in \Z} V_{i,n}$ be a corresponding weight space decomposition and let $F_{i,n} = \smash{\bigoplus_{m \leq n}} V_{i,m}$.
		For every $k > 0$ let $R'_k$ be a vector space complement of $[\mathfrak{u}_{F_*,k},M]$ inside $R_{F_*,k}$. Then the composition $\{M\} + \bigoplus_{k > 0} R'_k \into R_{F_*}(M) \to \Att^+(M)$ is an isomorphism.
	\end{thm}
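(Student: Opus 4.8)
The plan is to realize the composition in the statement as a $\G_m$-equivariant morphism between two affine spaces that are contracted to a point, and to deduce that it is an isomorphism from the fact that its differential at the fixed point is bijective, by a graded Nakayama argument. Write $S := \{M\} + \bigoplus_{k>0} R'_k$ and let $g \colon S \to \Att^+([M])$ be the composite $S \hookrightarrow R_{F_*}(M) \xrightarrow{\pi} \Att^+([M])$.

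Step 1: an auxiliary $\G_m$-action. Let $\psi \colon \G_m \to G_d$ be the one-parameter subgroup with $\psi_i(t) v_{i,n} = t^n v_{i,n}$, as in the preceding proof, and set $\tau(t) \cdot N := \psi(t)^{-1} \cdot (t.N)$. Since the $G_d$- and $\G_m$-actions on $R(Q,d)$ commute and $\psi$ is a homomorphism, $\tau$ is a $\G_m$-action on $R(Q,d)$. The identity $(\tau(t)\cdot N)_{a,m,n} = t^{n-m+w_a} N_{a,m,n}$, which is exactly the computation carried out in the proof of the preceding proposition, shows that $\tau$ leaves $R_{F_*}(M)$ invariant, fixes $M$, scales the summand $R_{F_*,k}$ of $R_{F_*}$ by $t^k$, and hence contracts $R_{F_*}(M)$ to $M$ as $t \to 0$; in particular $\tau$ leaves $S$ invariant. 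As $\pi$ is $G_d$-invariant and $\G_m$-equivariant, $g$ is $\G_m$-equivariant, because $\pi(\tau(t)\cdot N) = \pi(t.N) = t.\pi(N)$. On the target side, $\Att^+([M])$ is an affine space by Theorem \ref{t:bb}(4), and the $\G_m$-action contracts it to its unique fixed point $[M]$ by the very definition of the plus-attractor.

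Step 2: the differential at $M$ is an isomorphism. We already have $T_M S = \bigoplus_{k>0} R'_k$ and, as recorded just before the theorem, $T_{[M]}\Att^+([M]) \cong R_{F_*}/[\mathfrak{u}_{F_*},M]$; under these identifications $d_M g$ is the composite of the inclusion $\bigoplus_{k>0} R'_k \hookrightarrow R_{F_*}$ with the quotient map $R_{F_*} \to R_{F_*}/[\mathfrak{u}_{F_*},M]$. Since $[\blank,M]$ is compatible with the gradings, $[\mathfrak{u}_{F_*},M] = \bigoplus_{k>0}[\mathfrak{u}_{F_*,k},M]$ with $[\mathfrak{u}_{F_*,k},M] \subseteq R_{F_*,k}$, and by the hypothesis on $R'_k$ the space $\bigoplus_{k>0} R'_k$ is a vector space complement of $[\mathfrak{u}_{F_*},M]$ in $R_{F_*}$. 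Hence $d_M g$ is an isomorphism.

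Step 3: conclusion via graded Nakayama. Because $S$ and $\Att^+([M])$ are affine spaces carrying $\G_m$-actions contracting to their unique fixed points, their coordinate rings are non-negatively graded with degree-zero part equal to $k$; the equivariant morphism $g$ corresponds to a graded homomorphism $g^* \colon k[\Att^+([M])] \to k[S]$ whose induced map on cotangent spaces $\mathfrak{m}/\mathfrak{m}^2$ at the respective fixed points is $(d_M g)^*$. As $(d_M g)^*$ is bijective, $g^*$ is surjective modulo $\mathfrak{m}_S^2$, hence surjective by the graded Nakayama lemma; since $k[S]$ and $k[\Att^+([M])]$ are polynomial rings of the same Krull dimension, a surjection between them is automatically injective, so $g^*$ is an isomorphism and therefore so is $g$. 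The step I expect to require the most care is this last one: a morphism with bijective differential at a point need not be an isomorphism in general, so one must genuinely exploit the contracting $\G_m$-action (and the fact, supplied by Theorem \ref{t:bb}(4), that $\Att^+([M])$ is an affine space) to pass from the infinitesimal statement of Step 2 to a global one; the remaining steps amount to bookkeeping with the gradings of $R_{F_*}$ and $\mathfrak{u}_{F_*}$.
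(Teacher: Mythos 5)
Your proof is correct, but it takes a genuinely different route from the paper's. The paper argues constructively: expanding the orbit equation $gN = N'$ for $g \in U_{F_*}$ and $N, N' \in R_{F_*}(M)$ degree by degree yields the relation $x'_k = [M,g_k] + \tilde{x}_k$, with $\tilde{x}_k$ depending only on lower-degree data, and an induction on $k$ then shows both that distinct points of $\{M\}+R'$ lie in distinct $U_{F_*}$-orbits and that there is an explicit algebraic map $\gamma: R_{F_*}(M) \to U_{F_*}$ normalizing every $N$ into $\{M\}+R'$; the inverse morphism is then glued from local sections of the principal $U_{F_*}$-bundle. You instead observe that the twisted action $\tau(t)\cdot N = \psi(t)^{-1}\cdot t.N$ makes the composite equivariant for contracting $\G_m$-actions on source and target, identify its differential at $M$ with the isomorphism $\bigoplus_{k>0} R'_k \to R_{F_*}/[\mathfrak{u}_{F_*},M]$, and conclude by a graded inverse-function argument. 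That argument is sound: both coordinate rings are non-negatively graded domains whose degree-zero piece is the ground field, so bijectivity on $\mathfrak{m}/\mathfrak{m}^2$ does force surjectivity of the comorphism (strictly, graded Nakayama only gives that the image of the maximal ideal generates the maximal ideal as an ideal; that the image \emph{subalgebra} is everything requires the usual induction on degree, which works because all weights occurring are positive), and a surjection between domains of the same finite Krull dimension is injective. The trade-off: your argument is shorter and more conceptual but non-constructive, and it leans on Theorem \ref{t:bb}(4) to know that $\Att^+([M])$ is an affine space, while the paper's inductive elimination is precisely the algorithm that is run in Section \ref{s:appl} to produce the explicit normal forms and cell decompositions, and it directly exhibits the global section of $R_{F_*}(M) \to \Att^+([M])$ on which the subsequent remark about the triviality of that bundle relies.
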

	
	\begin{proof}
		Abbreviate $R' = \bigoplus_{k>0} R'_k$. We show
		\begin{enumerate}
			\item No two elements of $\{M\} + R'$ are in the same $U_{F_*}$-orbit.
			\item There is a morphism $\gamma: R_{F_*} \to U_{F_*}$ such that $\gamma(N)N \in \{M\} + R'$ for every $N \in R_{F_*}(M)$.
		\end{enumerate}
		This does indeed show that $\{M\} + R' \to \Att^+([M])$ is an isomorphism. The inverse is obtained as follows. As $R_{F_*}(M) \to \Att^+([M])$ is a principal $U_{F_*}$-fiber bundle and as the group $U_{F_*}$ is special, we find local sections $s_i: U_i \to R_{F_*}(M)$ on a Zariski open cover $\{U_i\}$. The morphisms $U_i \to \{M\} + R'$ given by $\gamma(s_i([N]))s_i([N])$ glue. The obtained morphism is the desired inverse.
		To show the two assertions above, we make a general observation.
		Let $N = M+x, N' = M+\smash{x'} \in \smash{R_{F_*}(M)}$ and $g \in \smash{U_{F_*}}$. Define again $\smash{N_{a,m,n}}$, $\smash{N'_{a,m,n}}$, $\smash{g_{i,m,n}}$ and $\smash{x_{i,m,n}}$, $\smash{x'_{a,m,n}}$ as before. Let $h_i = \smash{g_i^{-1}}$ and decompose it into $h_{i,m,n}$. If $gN = N'$ then
		\begin{align*}
			N'_{a,m,n} &= 
			\sum_{r = m}^{n+w_a} \sum_{s = r-w_a}^n g_{j,m,r}N_{a,r,s}h_{i,s,n}
		\end{align*}
		for all $a: i \to j$ and all $m,n \in \Z$. The ranges of the sums come from $N$, $g$, and $x$ being adapted to the filtration $F_*$. For $m > n+w_a$ this equation is void and the equation for $m = n+w_a$ is tautological. The other equations are
		\begin{align*}
			x'_{a,n+w_a-1,n} =\ & M_{a,n-1}h_{i,n-1,n} + x_{a,n+w_a-1,n} + g_{j,n+w_a-1,n+w_a}M_{a,n} \\
			x'_{a,n+w_a-2,n} =\ & M_{a,n-2}h_{i,n-2,n} + x_{a,n+w_a-2,n-1}h_{i,n-1,n} + x_{a,n+w_a-2,n} + \\
					&+ g_{j,n+w_a-2,n+w_a-1}M_{a,n-1}h_{i,n-1,n} + g_{j,n+w_a-2,n+w_a-1}x_{a,n+w_a-1,n} + \\ 
					&+ g_{j,n+w_a-2,n+w_a}M_{a,n} \\
					\vdots\ \ & \\
			x'_{a,n+w_a-k,n} =\ & M_{a,n-k}h_{i,n-k,n} + \ldots + g_{j,n+w_a-k,n+w_a}M_{a,n}
		\end{align*}
		Note that for $m < n$ the $h_{i,m,n}$ satisfy the equations
		$$
			h_{i,m,n} + \sum_{r=m+1}^{n-1} g_{i,m,r}h_{i,r,n} + g_{i,m,n},
		$$
		so we may write $h_{i,m,n} = -g_{i,m,n} + \tilde{h}_{i,m,n}$, where $\tilde{h}_{i,m,n}$ depends algebraically on elements $g_{i,r,s}$ for which $s-r < n-m$. This shows that we obtain an equation
		$$
			x'_{a,n+w_a-k,n} = -M_{a,n-k}g_{i,n-k,n} + \tilde{x}_{a,n+w_a-k,n} + g_{j,n+w_a-k,n+w_a}M_{a,n}
		$$
		where $\tilde{x}_{a,n+w_a-k,n}$ is an expression that depends algebraically only on $x_{a,n+w_a-k,n}$, $M$, and $x_{a,r+w_a,s}$, $g_{i,r,s}$, $g_{j,r,s}$ with $s-r < k$. Define
		\begin{align*}
			x'_k &= \sum_a \sum_n x'_{a,n+w_a-k,n} \in R_{F_*,k}, \\
			\tilde{x}_k &= \sum_a \sum_n \tilde{x}_{a,n+w_a-k,n} \in R_{F_*,k}, \\
			g_k &= \sum_i \sum_n g_{i,n,n-k} \in \mathfrak{u}_{F_*,k}.
		\end{align*}
		Then $\tilde{x}_k$ depends algebraically only on $x_k$, $M$, and $x_l$, $g_l$ with $l < k$. Moreover, the equation
		\begin{equation} \label{e:star} \tag{*}
			x'_k = [M,g_k] + \tilde{x}_k
		\end{equation}
		holds.
		With these observations we are able to show (1) and (2).
		\begin{enumerate}
			\item Suppose that both $N$ and $N'$ lie in $\{M\}+R'$. We show by induction on $k>0$ that $\tilde{x}_k = x_k$ and $g_k=0$. For $k=1$ the first equality is automatic. Equation \ref{e:star} implies $x'_1 - x_1 = [M,g_1]$ which is zero, as they live in complementary subspaces. By injectivity of the map $[M,\blank]$, we obtain $g_1 = 0$. We assume $k>0$. From the definition of $\tilde{x}_k$ we see that $g_l = 0$ (all $l<k$) forces $\tilde{x}_k = x_k$. We conclude $g_k = 0$ just as in the base of the induction.
			\item Now suppose $N = M+x \in R_{F_*}(M)$ is given. We would like to find $g = \gamma(N) \in U_{F_*}$ depending algebraically on $N$ such that $gN \in \{M\}+R'$. We will construct by induction on $k > 0$ matrices $g_k \in \mathfrak{u}_{F_*,k}$ such that $g = e + \sum_{k > 0} g_k$ does the job. For $k = 1$, we want $g_1$ such that $[M,g_1] + x_1$ lies in $R'_1$. By assumption, such a $g_1$ exists and is unique. The dependency of $g_1$ on $x_1$ is algebraic, as $g_1$ is the image of $x_1$ under the projection $R_{F_*} \to \mathfrak{u}_{F_*}$ along $R'_1$. The induction step $k>0$, we need $g_k$ such that $[M,g_k]+\tilde{x}_k \in R'_k$. The element $\tilde{x}_k$ depends algebraically on $x$ and the matrices $g_l$ constructed before. Hence we may argue as in the case $k=1$. \qedhere
		\end{enumerate}
	\end{proof}
	
	\begin{rem}
		Theorem \ref{t:att_explicit} shows that the principal $U_{F_*}$-fiber bundle $\smash{R_{F_*}(M)} \to \Att^+([M])$ admits a global section. It is therefore trivial. In particular, $\smash{R_{F_*}(M)} \to \Att^+([M])$ is a (trivial) affine bundle. This answers a question posed in \cite{KW:19}.
	\end{rem}
	
	
	\begin{defn*}
		Let $G$ be an algebraic group acting on a variety $Y$ and suppose that there exists a geometric quotient $\pi: Y \to X = Y/G$. A generic normal form for $X$ or for the action of $G$ on $Y$ is a locally closed embedding $\phi: \A^n \to Y$ such that the composition $\pi\phi: \A^n \to X$ is an open embedding with a dense image.
	\end{defn*}

	\begin{cor} \label{c:gen_nf}
		Suppose there exists a dimension vector $\beta \in \Lambda_+(Q(w))$ with $c(\beta) = d$ satisfying the following two conditions:
		\begin{enumerate}
			\item $\beta$ is a real root of $Q(w)$, i.e.\ $\langle \beta,\beta \rangle_{Q(w)} = 1$,
			\item $\langle \beta,s_{-n}(\beta) \rangle_{Q(w)} = 0$ for all $n < 0$.
		\end{enumerate}
		Let $[M]$ be the isolated fixed point which corresponds to $\beta$. Let $R'_k$ be chosen as in Theorem \ref{t:att_explicit}. Then the morphism
		$$
			\phi: \bigoplus_{k>0} R'_k \to R(Q,d)^\theta,\ x \mapsto x+M
		$$
		is a generic normal form for $M^\theta(Q,d)$.
	\end{cor}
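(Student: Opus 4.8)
The plan is to reduce the statement to Theorem~\ref{t:att_explicit} by showing that, under hypotheses (1) and (2), the plus-attractor $\Att^+([M])$ is open and dense in $M^\theta(Q,d)$; this in turn amounts to a dimension count.

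\textbf{Step 1: a numerical identity.} First I would establish that for every $\beta \in \Lambda_+(Q(w))$ with $c(\beta) = d$ one has
$$
	\sum_{n \in \Z} \langle \beta, s_{-n}(\beta) \rangle_{Q(w)} = \langle d,d \rangle_Q .
$$
This is a direct expansion of the Euler form of $Q(w)$: the sum $\sum_{n}\sum_{\chi} \beta_{i,\chi}\beta_{i,\chi-n}$ equals $\sum_\chi \beta_{i,\chi}\cdot d_i = d_i^2$ (using $\sum_m \beta_{i,m} = d_i$), and likewise $\sum_{n}\sum_\chi \beta_{s(a),\chi}\beta_{t(a),\chi+w_a-n} = d_{s(a)}d_{t(a)}$; both inner sums are finite because $\beta$ has finite support. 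Feeding hypotheses (1) and (2) into this identity, the terms with $n<0$ vanish and the $n=0$ term equals $1$, so $\langle d,d\rangle_Q = 1 + \sum_{n>0}\langle\beta,s_{-n}(\beta)\rangle_{Q(w)} = 1 - d_\beta$ with $d_\beta$ the exponent of Corollary~\ref{c:poincare}. On the other hand, $R(Q,d)^\theta \to M^\theta(Q,d)$ is a $PG_d$-bundle over a nonempty open subset of the affine space $R(Q,d)$, whence $\dim M^\theta(Q,d) = \dim R(Q,d) - \dim PG_d = 1 - \langle d,d\rangle_Q$. Therefore $d_\beta = \dim M^\theta(Q,d)$.

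\textbf{Step 2: $\Att^+([M])$ is the big cell.} Since $[M]$ is an isolated fixed point, $\Att^+([M])$ is one of the strata $X_\beta^+$ of Theorem~\ref{t:bb}, and by part~(4) of that theorem combined with Theorem~\ref{t:wtsp} its dimension is $\sum_{m>0}\dim \big(T_{[M]}M^\theta(Q,d)\big)_m = d_\beta$. Now $M^\theta(Q,d)$ is irreducible, being a geometric quotient of the irreducible variety $R(Q,d)^\theta$ (nonempty, since $[M]$ exists, and open in an affine space). By part~(3) of Theorem~\ref{t:bb} the final stratum in the distinguished total order is open, hence dense, of dimension $\dim M^\theta(Q,d)$, whereas every other stratum is contained in a proper closed subset and so has strictly smaller dimension. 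Since $\dim \Att^+([M]) = d_\beta = \dim M^\theta(Q,d)$ by Step~1, $\Att^+([M])$ must be this open dense stratum.

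\textbf{Step 3: assembling the normal form.} By Theorem~\ref{t:att_explicit}, the quotient map $\pi$ restricts to an isomorphism $\{M\} + \bigoplus_{k>0} R'_k \to \Att^+([M])$; post-composing with the open immersion $\Att^+([M]) \into M^\theta(Q,d)$ from Step~2 shows that $\pi\phi$ is an open embedding with dense image. Moreover $\phi$ is itself a locally closed embedding: it identifies the finite-dimensional space $\bigoplus_{k>0} R'_k$ isomorphically with the affine-linear subspace $\{M\} + \bigoplus_{k>0} R'_k$, which lies in $R_{F_*}(M) \sub R(Q,d)^\theta$ (so the map indeed lands where claimed) and is Zariski closed in $R(Q,d)$, hence closed in the open subvariety $R(Q,d)^\theta$. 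These are exactly the defining conditions of a generic normal form. The only step with genuine content is Step~1; everything else is a bookkeeping combination of Theorems~\ref{t:bb}, \ref{t:wtsp} and \ref{t:att_explicit} with the irreducibility of the moduli space, so I expect the Euler-form identity and its interaction with the root conditions (1), (2) to be the main — though still routine — obstacle.
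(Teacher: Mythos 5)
Your proposal is correct and follows essentially the same route as the paper: use the identity $\sum_{n\in\Z}\langle\beta,s_{-n}(\beta)\rangle_{Q(w)}=\langle d,d\rangle_Q$ together with hypotheses (1) and (2) to show $\dim\Att^+([M])=\dim M^\theta(Q,d)$, conclude by irreducibility and local closedness that $\Att^+([M])$ is the unique open (dense) attractor, and then invoke Theorem~\ref{t:att_explicit}. You merely spell out the Euler-form identity and the locally-closed-embedding check that the paper leaves implicit (and, incidentally, you get the sign in $-\sum_{n>0}\langle\beta,s_{-n}(\beta)\rangle_{Q(w)}$ right where the paper's displayed chain of equalities has a typo).
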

	
	\begin{proof}
		As $\beta$ is a real root, the moduli space $M^{\hat{\theta}}(Q(w),\beta)$ is a single point, so $[M]$ is indeed an isolated fixed point. The second condition implies 
		\begin{align*}
			\dim M^\theta(Q,d) = 1 - \langle d,d \rangle_Q = 1-\sum_{n \in \Z} \langle \beta,s_{-n}(\beta) \rangle_{Q(w)} = \sum_{n>0} \langle \beta,s_{-n}(\beta) \rangle_{Q(w)} = \dim \Att^+([M])
		\end{align*}
		where we have used Theorem \ref{t:wtsp} to obtain the rightmost equality. As $M^\theta(Q,d)$ is irreducible and the attractors are locally closed, this tells us that $\Att^+([M])$ is the unique open attractor. An application of Theorem \ref{t:att_explicit} concludes the proof.
	\end{proof}

	\begin{rem}
		It was shown by Schofield that the moduli space $M^\theta(Q,d)$ is a rational variety \cite[Thm.\ 6.4]{Schofield:01}. The above corollary gives a sufficient condition for when there exists an open subset of $M^\theta(Q,d)$ which is isomorphic to an affine space (and gives an explicit description of that subset). It would be interesting to know whether there is always an open subset which is isomorphic to an affine space.
	\end{rem}

	\section{Application to Kronecker Moduli}\label{s:appl}
	
	
	In this section let $Q = K(l+1)$ be the $l+1$-Kronecker quiver. It has two vertices $i$ and $j$ and $l+1$ arrows $a_1,\ldots,a_{l+1}$. We are concerned with dimension vectors of the form $d = (2,2r+1)$. Take the stability condition $\theta$ defined by $\theta(e) = e_i$. A representation $M \in R(Q,d)$ is a tuple $(A_1,\ldots,A_{l+1})$ of $(2r+1) \times 2$-matrices; it is (semi\nobreakdash-)stable if and only if $\im A_1 + \ldots + \im A_{l+1} = k^{2r+1}$ and $\dim \langle A_1x,\ldots,A_{l+1}x \rangle \geq r+1$ for every $x \in k^2 - \{0\}$. Obviously, for $M^\theta(K(l+1),d) \neq \emptyset$, we have to require $l \geq r$. We consider the action of $T = \G_m$ by the weights $w_1 \gg \ldots \gg w_{l+1} > 0$ (we write $w_i$ instead of $w_{a_i}$ for brevity).
	There are two types of covering dimension vectors:
	$$
		\begin{tikzcd}
				& 1	& \\[-2.5em]
				& \vdots& \\[-2em]
				& 1	& \\[-1.5em]
			1 \arrow{r}{} \arrow{ur}{} \arrow{uuur}{}	& 1	& 1 \arrow{l}{} \arrow{dl} \arrow{dddl} \\[-1.5em]
				& 1	& \\[-2.5em]
				& \vdots& \\[-2em]
				& 1	&
		\end{tikzcd}
		\quad \quad \qquad
		\begin{tikzcd}
				& 1 \\[-2.5em]
				& \vdots& \\[-2em]
				& 1	& \\[-1.5em]
			2 \arrow{ur} \arrow{uuur} \arrow{dr} \arrow{dddr}	&	& \\[-1.5em]
				& 2	& \\[-2.5em]
				& \vdots& \\[-2em]
				& 2	&
		\end{tikzcd}
	$$
	The underlying vertices of the first type of covering dimension vector are
	$$
		\begin{tikzcd}
				& (j,w_{m_r}-w_{m})	& \\[-2.5em]
				& \vdots& \\[-2em]
				& (j,w_{m_1}-w_{m})	& \\[-1.5em]
			(i,-w_{m}) \arrow{r}{} \arrow{ur}{} \arrow{uuur}{}	& (j,0)	& (i,-w_{n}) \arrow{l}{} \arrow{dl} \arrow{dddl} \\[-1.5em]
				& (j,w_{n_1}-w_{n})	& \\[-2.5em]
				& \vdots& \\[-2em]
				& (j,w_{n_r}-w_{n})	&
		\end{tikzcd}
	$$
	for two sequences of pairwise distinct indexes $m_1,\ldots,m_r$ and $n_1,\ldots,n_r$ in $\{1,\ldots,l+1\}$ and indexes $m \neq n$ such that $m \notin \{m_1,\ldots,m_r\}$ and $n \notin \{n_1,\ldots,n_r\}$. By symmetry we may require $m_1 < \ldots < m_r$ and $n_1 < \ldots < n_r$ and $m < n$. Let $1 \leq m_{r+1} < \ldots < m_l \leq l+1$ and $1 \leq n_{r+1} < \ldots < n_l \leq l+1$ be the complementary indexes to $m,m_1,\ldots,m_r$ and $n,n_1,\ldots,n_r$. Denote this dimension vector by $\beta$ and let $[M_\beta]$ be the corresponding fixed point; note that $[M_\beta]$ is an isolated fixed point because $\beta$ is a real root. Abbreviate $V = T_{[M_\beta]}M^\theta(K(l+1),d)$. Then for any $\chi \in \Z$ the weight space dimension is given by Theorem \ref{t:wtsp} as
	\begin{align*}
		\dim V_\chi =&~ \delta(\chi,0) - \langle \beta,s_{-\chi}(\beta) \rangle \\
			=&~ \delta(\chi,0) - \beta_{(i,-w_{m}-\chi)} - \beta_{(i,-w_{n}-\chi)} - \beta_{(j,-\chi)} - \sum_{\nu=1}^r ( \beta_{(j,w_{m_\nu}-w_{m}-\chi)} + \beta_{(j,w_{n_\nu}-w_{n}-\chi)} )\\ 
			&+ \sum_{t=1}^{l+1} ( \beta_{(j,w_t-w_{m}-\chi)} + \beta_{(j,w_t-w_{n}-\chi)} ) \\
			=&~ \delta(\chi,0) - \beta_{(i,-w_{m}-\chi)} - \beta_{(i,-w_{n}-\chi)} + \beta_{(j,-\chi)} + \sum_{\mu=r+1}^l ( \beta_{(j,w_{m_\mu}-w_{m}-\chi)} + \beta_{(j,w_{n_\mu}-w_{n}-\chi)} ).
	\end{align*}
	The picture above gives us the entries of the dimension vector $\beta$. We compute
	\begin{align*}
		\dim V_\chi =& -\delta(\chi,w_{n}-w_{m}) - \delta(\chi,w_{m}-w_{n}) \\
			&+ \sum_{\nu=1}^r (\delta(\chi,w_m-w_{m_\nu}) + \delta(\chi,w_n-w_{n_\nu})) \\
			&+ \sum_{\mu=r+1}^l ( \delta(\chi,w_{m_\mu}-w_{m}) + \delta(\chi,w_{n_\mu}-w_{n}) ) \\
			&+ \sum_{\mu=r+1}^l \sum_{\nu=1}^r ( \delta(\chi,w_{m_\mu}-w_{m_\nu})+\delta(\chi,w_{m_\mu}-w_{m}-w_{n_\nu} +w_{n}) ) \\ 
			&+ \sum_{\mu=r+1}^l \sum_{\nu=1}^r ( \delta(\chi,w_{n_\mu}-w_{n_\nu}) + \delta(\chi,w_{n_\mu}-w_{n}-w_{m_\nu}+w_{m}) ).
	\end{align*}
	Then we obtain the following value for the dimension of the attractor of $[M_\beta]$:
	\begin{align*}
		\dim \Att^+([M_\beta]) = -1 
			&+ \sharp \{ \nu \in \{1,\ldots,r\} \mid m < m_\nu \} \\
			&+ \sharp \{ \nu \in \{1,\ldots,r\} \mid n < n_\nu \} \\
			&+ \sharp \{ \mu \in \{r+1,\ldots,l\} \mid m_\mu < m \} \\
			&+ \sharp \{ \mu \in \{r+1,\ldots,l\} \mid n_\mu < n \} \\
			&+ \sharp \{ (\mu,\nu) \in \{ r+1,\ldots,l\} \times \{1,\ldots,r\} \mid m_\mu < m_\nu \} \\
			&+ \sharp \{ (\mu,\nu) \in \{ r+1,\ldots,l\} \times \{1,\ldots,r\} \mid \min(m_\mu,n) < \min(n_\nu,m) \} \\
			&+ \sharp \{ (\mu,\nu) \in \{ r+1,\ldots,l\} \times \{1,\ldots,r\} \mid n_\mu < n_\nu \} \\
			&+ \sharp \{ (\mu,\nu) \in \{ r+1,\ldots,l\} \times \{1,\ldots,r\} \mid \min(n_\mu,m) < \min(m_\nu,n) \}.
	\end{align*}
	Note that the sum of the first four cardinalities is greater than or equal to one.
	The attractor dimension of $[M_\beta]$ being uniquely determined by $m,m_* = (m_1,\ldots,m_r)$, $n$, and $n_* = (n_1,\ldots,n_r)$, we define $d_1^+(m,m_*,n,n_*) = \dim \Att^+([M_\beta])$. The dimension $d_1^-(m,m_*,n,n_*)$ of the attractor $\Att^-([M_\beta])$ is given by replacing every ``${<}$'' in the above formula by a ``${>}$''.
	
	\enlargethispage{2cm}
	\begin{prop}
		The moduli space $M^\theta(K(l+1),(2,2r+1))$ possesses a generic normal form.
		More precisely, setting $s=l-r$, there exists a generic normal form $\A^{(2s+1)(2r+1)-3} \to R(K(l+1),(2,2r+1))^\theta$ whose image consists precisely of those tuples of matrices which are of the form
		$$
			\begin{blockarray}{cccccccccccccc}
				& & A_1 & & A_s & & A_{s+1} & & A_{s+2} & & A_{s+3} & & A_{l+1} & \\
			\begin{block}{c(c(c)c(c)c(c)c(c)c(c)c(c)c)}
				1 & & \ray{*}{*} & & \ray{*}{*} & & \ray{}{} & & \ray{}{*} & & \ray{}{} & & \ray{1}{} & \\
				\vd & & \ray{\vd}{\vd} & & \ray{\vd}{\vd} & & \ray{}{} & & \ray{}{\vd} & & \ray{}{} & & \ray{}{} & \\
				r-1 & & \ray{\vd}{\vd} & & \ray{\vd}{\vd} & & \ray{}{} & & \ray{}{*} & & \ray{1}{} & & \ray{}{} & \\
				r & & \ray{\vd}{\vd} & & \ray{\vd}{\vd} & & \ray{}{} & & \ray{1}{} & & \ray{}{} & & \ray{}{} & \\
				r+1 & & \ray{\vd}{\vd} & ,\ldots, & \ray{\vd}{\vd} & , & \ray{}{} & , & \ray{}{*} & , & \ray{}{} & ,\ldots, & \ray{}{1} & \\
				\vd & & \ray{\vd}{\vd} & & \ray{\vd}{\vd} & & \ray{}{} & & \ray{}{\vd} & & \ray{}{} & & \ray{}{} & \\
				2r-1 & & \ray{\vd}{\vd} & & \ray{\vd}{\vd} & & \ray{}{} & & \ray{}{*} & & \ray{}{1} & & \ray{}{} & \\
				2r & & \ray{\vd}{\vd} & & \ray{\vd}{\vd} & & \ray{1}{} & & \ray{}{1} & & \ray{}{} & & \ray{}{} & \\
				2r+1 & & \ray{*}{*} & & \ray{*}{*} & & \ray{}{1} & & \ray{}{} & & \ray{}{} & & \ray{}{} & \\
			\end{block}
			\end{blockarray}\ .
		$$
	\end{prop}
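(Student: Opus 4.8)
The plan is to exhibit a dimension vector $\beta$ of the covering quiver $Q(w)$ satisfying the two conditions of Corollary \ref{c:gen_nf}, and then to make the resulting generic normal form of Theorem \ref{t:att_explicit} completely explicit in coordinates.

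\emph{Choice of $\beta$ and verification of the hypotheses.} Put $s=l-r$. Take the first-type covering dimension vector $\beta$ given by
$$
 m=s+1,\quad n=s+2,\quad (m_1,\dots,m_r)=(s+2,s+3,\dots,l+1),\quad (n_1,\dots,n_r)=(s+1,s+3,s+4,\dots,l+1),
$$
whose complementary indices are $m_\mu=n_\mu=\mu-r$ for $\mu\in\{r+1,\dots,l\}$. One checks at once that the constraints $m<n$, $m_1<\dots<m_r$, $n_1<\dots<n_r$, $m\notin\{m_\nu\}$, $n\notin\{n_\nu\}$ hold, so this is a legitimate first-type dimension vector; being of the first type it is a real root, which is Condition (1). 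Let $[M_\beta]$ be the associated isolated fixed point. By Theorem \ref{t:wtsp}, Condition (2) ($\langle\beta,s_{-n}(\beta)\rangle_{Q(w)}=0$ for $n<0$) amounts to $\dim V_\chi=0$ for every $\chi<0$, where $V=T_{[M_\beta]}M^\theta(K(l+1),d)$. I would feed the data above into the formula for $\dim V_\chi$ displayed just above the proposition: the only term that is negative for $\chi<0$ is $-\delta(\chi,w_n-w_m)=-\delta(\chi,w_{s+2}-w_{s+1})$, and the hypothesis $w_1\gg\dots\gg w_{l+1}>0$ — under which the sign of any integer combination of the $w_t$ with small coefficients is that of its term of least index — shows that, for this choice of $m,n,m_*,n_*$, every center of a positive $\delta$-term is $\geq0$ except for the single term $\delta(\chi,w_n-w_{n_1})=\delta(\chi,w_{s+2}-w_{s+1})$ which occurs because $n_1=s+1=m$. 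These two cancel and no other center equals $w_{s+2}-w_{s+1}$, so $\dim V_\chi=0$ for all $\chi<0$ and Corollary \ref{c:gen_nf} applies.

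\emph{Making the normal form explicit.} By Corollary \ref{c:gen_nf} and Theorem \ref{t:att_explicit}, for any complements $R'_k$ of $[\mathfrak{u}_{F_*,k},M_\beta]$ in $R_{F_*,k}$ the map $x\mapsto x+M_\beta$ is a generic normal form on $\bigoplus_{k>0}R'_k$; it remains to work out $M_\beta$, $R_{F_*}$, $\mathfrak{u}_{F_*}$, and a suitable complement. Using the dominance of the weights, order the weight spaces: $V_i=V_{i,-w_{s+1}}\oplus V_{i,-w_{s+2}}$ with basis $e_1,e_2$, and $V_j=\bigoplus_{p=1}^{2r+1}\langle f_p\rangle$ where the weights of $f_1,\dots,f_{2r+1}$ increase in the order
$$
 w_{l+1}-w_{s+1}<\dots<w_{s+2}-w_{s+1}<w_{l+1}-w_{s+2}<\dots<w_{s+3}-w_{s+2}<0<w_{s+1}-w_{s+2}.
$$
Here $M_\beta=c(N_\beta)$, with $N_\beta$ the unique thin indecomposable representation of the tree-shaped support of $\beta$ with all structure maps $1$; writing $M_\beta=(B_1,\dots,B_{l+1})$ in the above bases, $B_t=0$ for $t\leq s$, while $B_{s+1}$ has $1$'s in positions $(2r,1),(2r+1,2)$, $B_{s+2}$ in $(r,1),(2r,2)$, and $B_{s+1+\nu}$ in $(r+1-\nu,1),(2r+1-\nu,2)$ for $\nu=2,\dots,r$ — exactly the $1$-entries of the displayed array. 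From $F_{i,n}=\bigoplus_{m\leq n}V_{i,m}$ and the definition of $R_{F_*}$, the $a_t$-block of $R_{F_*}$ is all of $\Hom(V_i,V_j)$ for $t\leq s$ (there $w_t-w_{s+q}$ exceeds every weight of $V_j$) and a proper subspace for $t>s$, and an elementary count gives $\dim\mathfrak{u}_{F_*}=1+\binom{2r+1}{2}$ and $\dim R_{F_*}-\dim\mathfrak{u}_{F_*}=(2s+1)(2r+1)-3$. Let $R'\sub R_{F_*}$ be the span of the $*$-positions of the displayed array, i.e.\ the full $a_1,\dots,a_s$-blocks together with the column-$2$ entries of $B_{s+2}$ in rows $\{1,\dots,r-1\}\cup\{r+1,\dots,2r-1\}$; this is a homogeneous subspace with $\dim R'=2s(2r+1)+(2r-2)=\dim R_{F_*}-\dim\mathfrak{u}_{F_*}$. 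Since $[\blank,M_\beta]$ is injective and graded, $R'$ is a complement (so $R'=\bigoplus_kR'_k$ with $R'_k$ as required) as soon as $R'\cap[\mathfrak{u}_{F_*},M_\beta]=0$. To see this, take $x=(x_i,x_j)\in\mathfrak{u}_{F_*}$ with $[x,M_\beta]\in R'$ and write $x_i(e_2)=c\,e_1$; reading $[x,M_\beta]_{a_t}=x_jB_t-B_tx_i$ through the blocks $t>s$: vanishing of the $a_{s+1}$- and $a_{s+1+\nu}$-blocks ($\nu\geq2$) and of column $1$ of the $a_{s+2}$-block yields $x_jf_p=0$ for $p\in\{1,\dots,r,2r\}$, $x_jf_{2r+1}=cf_{2r}$, and $x_jf_{p}=cf_{p-r}$ for $p\in\{r+1,\dots,2r-1\}$; then column $2$ of the $a_{s+2}$-block equals $-cf_r$, and the requirement that it have no entry in row $r$ forces $c=0$, whence every $x_jf_p=0$, $x_i=0$, and $x=0$. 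Thus $R'$ is a complement, the image of the generic normal form $x\mapsto x+M_\beta$ is $\{M_\beta\}+R'$, and this is precisely the displayed family of matrix tuples.

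\emph{The main obstacle.} The conceptual content — the choice of $\beta$ and the verification of the two conditions of Corollary \ref{c:gen_nf} — is immediate from the weight-space dimension formula already established. The bulk of the work lies in the coordinate bookkeeping of the second step: fixing the ordering of the weight spaces under the dominance hypothesis, identifying $M_\beta$ and the admissible positions of $R_{F_*}$, and running the block-by-block elimination that shows the displayed $*$-space meets $[\mathfrak{u}_{F_*},M_\beta]$ trivially. None of this is deep, but it must be organized carefully — for instance by induction on the weight level $k$, in the spirit of the proof of Theorem \ref{t:att_explicit}.
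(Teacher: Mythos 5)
Your proposal is correct and follows essentially the same route as the paper: you select the same covering dimension vector $\beta$ (with $m=s+1$, $n=s+2$, $m_*=(s+2,\dots,l+1)$, $n_*=(s+1,s+3,\dots,l+1)$), verify the hypotheses of Corollary \ref{c:gen_nf} via the weight-space dimension formula (the paper phrases this as $d_1^-(m,m_*,n,n_*)=0$), and then order the weight spaces identically to pin down $M_\beta$ and $R_{F_*}(M_\beta)$. The only (harmless) difference is in the last step: the paper eliminates the redundant entries by explicitly acting with a homogeneous basis of $\mathfrak{u}_{F_*}$, whereas you exhibit the $*$-span $R'$, check $R'\cap[\mathfrak{u}_{F_*},M_\beta]=0$ directly, and conclude by the dimension count $\dim R_{F_*}-\dim\mathfrak{u}_{F_*}=(2s+1)(2r+1)-3$ --- two equivalent ways of organizing the same computation.
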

	
	\begin{proof}
		By the above fomula for the attractor dimensions we see that $d_1^-(m,m_*,n,n_*) = 0$ if and only if $m=s+1$, $m_*=(s+2,\ldots,l+1)$, $n=s+2$, and $n_*=(s+1,s+3,\ldots,l+1)$. As the corresponding dimension vector $\beta$ is a real root, we know that a generic normal form exists whose image in $M^\theta(K(l+1),(2,2r+1))$ is $\Att^+([M_\beta])$. Corollary \ref{c:gen_nf} tells us how to obtain it. To derive a matrix notation as above, we order the occurring weights in $\beta$. We get $-w_{s+1} < -w_{s+2}$ and
		$$
			-w_{s+1}+w_{l+1} < \ldots < -w_{s+1}+w_{s+2} < -w_{s+2}+w_{l+1} < \ldots < -w_{s+2}+w_{s+3} < 0 < w_{s+1}-w_{s+2}.
		$$
		We choose a basis vector in each of these weight spaces and order them according to the above orders. Then we see that $R_{F_*}(M_\beta)$ is the set of tuples of matrices of the following form
		$$
			\begin{blockarray}{cccccccccccccc}
				& & A_1 & & A_s & & A_{s+1} & & A_{s+2} & & A_{s+3} & & A_{l+1} & \\
			\begin{block}{c(c(c)c(c)c(c)c(c)c(c)c(c)c)}
				1 & & \ray{*}{*} & & \ray{*}{*} & & \ray{*}{*} & & \ray{*}{*} & & \ray{*}{*} & & \ray{1}{*} & \\
				\vd & & \ray{\vd}{\vd} & & \ray{\vd}{\vd} & & \ray{\vd}{\vd} & & \ray{\vd}{\vd} & & \ray{\vd}{\vd} & & \ray{}{\vd} & \\
				r-2 & & \ray{\vd}{\vd} & & \ray{\vd}{\vd} & & \ray{\vd}{\vd} & & \ray{\vd}{\vd} & & \ray{*}{\vd} & & \ray{}{\vd} & \\
				r-1 & & \ray{\vd}{\vd} & & \ray{\vd}{\vd} & & \ray{\vd}{\vd} & & \ray{*}{\vd} & & \ray{1}{\vd} & & \ray{}{\vd} & \\
				r & & \ray{\vd}{\vd} & & \ray{\vd}{\vd} & & \ray{\vd}{\vd} & & \ray{1}{\vd} & & \ray{}{\vd} & & \ray{}{*} & \\
				r+1 & & \ray{\vd}{\vd} & ,\ldots, & \ray{\vd}{\vd} & , & \ray{\vd}{\vd} & , & \ray{}{\vd} & , & \ray{}{\vd} & ,\ldots, & \ray{}{1} & \\
				\vd & & \ray{\vd}{\vd} & & \ray{\vd}{\vd} & & \ray{\vd}{\vd} & & \ray{}{\vd} & & \ray{}{\vd} & & \ray{}{} & \\
				2r-2 & & \ray{\vd}{\vd} & & \ray{\vd}{\vd} & & \ray{\vd}{\vd} & & \ray{}{\vd} & & \ray{}{*} & & \ray{}{} & \\
				2r-1 & & \ray{\vd}{\vd} & & \ray{\vd}{\vd} & & \ray{*}{\vd} & & \ray{}{*} & & \ray{}{1} & & \ray{}{} & \\
				2r & & \ray{\vd}{\vd} & & \ray{\vd}{\vd} & & \ray{1}{*} & & \ray{}{1} & & \ray{}{} & & \ray{}{} & \\
				2r+1 & & \ray{*}{*} & & \ray{*}{*} & & \ray{}{1} & & \ray{}{} & & \ray{}{} & & \ray{}{} & \\
			\end{block}
			\end{blockarray}\ .
		$$
		A homogenous basis of the graded Lie algebra $\mathfrak{u}_{F_*}$ is given by the elements $x = \big(\big(\begin{smallmatrix}0 & 1 \\ 0 & 0 \end{smallmatrix}\big),0\big)$ and the elements $y_{i,j}=(0,E_{i,j})$ with $1 \leq i < j \leq 2r+1$; the matrix $E_{i,j}$ is defined as the matrix with an entry 1 in position $(i,j)$ and zeros elsewhere. So $[x,M_\beta]$ is, up to a sign, just right multiplication of each of the matrices of $M_\beta$ with the matrix $\big(\begin{smallmatrix}0 & 1 \\ 0 & 0 \end{smallmatrix}\big)$ while $[y_{i,j},M_\beta]$ is left multiplication with $E_{i,j}$. First we may use $y_{i,2s}$ to eliminate all entries above 1 in the first column of $A_{s+1}$. Then we eliminate the entry $(r,2)$ from the matrix $A_{s+2}$. Afterwards we employ $y_{i,j}$ with $j \neq 2s$ to annihilate all the entries above a 1 in every matrix, except for the first column of $A_{s+1}$ and the second column of $A_{s+2}$. We have used up all basis elements and we have arrived at the shape asserted in the Proposition.
	\end{proof}
	
	
	Next, we would like to determine the Poincar{\'e} series of $M^\theta(K(l+1),(2,2r+1))$. For this we need to analyze the covering dimension vectors of the second kind.
	The underlying vertices of the second type of covering dimension vector are
		$$\begin{tikzcd}
					& 1 \\[-2.5em]
					& \vdots& \\[-2em]
					& 1	& \\[-1.5em]
				2 \arrow{ur} \arrow{uuur} \arrow{dr} \arrow{dddr}	&	& \\[-1.5em]
					& 2	& \\[-2.5em]
					& \vdots& \\[-2em]
					& 2	&
			\end{tikzcd}~~
			\begin{tikzcd}
					&  (j,w_{m_1}) \\[-2.5em]
					& \vdots& \\[-2em]
					&  (j,w_{m_x})	& \\[-1.5em]
				(i,0) \arrow{ur} \arrow{uuur} \arrow{dr} \arrow{dddr}	&	& \\[-1.5em]
					&  (j,w_{n_1})	& \\[-2.5em]
					& \vdots& \\[-2em]
					&  (j,w_{n_y})	&
			\end{tikzcd}
		$$
	for two sequences of indexes $m_1,\ldots,m_x$ and $n_1,\ldots,n_y$ in $\{1,\ldots,l+1\}$ such that 
	$$
		\sharp(\{m_1,\ldots,m_x\} \cup \{n_1,\ldots,n_y\})=x+y \leq l+1
	$$ 
	and $x+2y = 2r+1$; note that this forces $x$ to be odd. Denote this dimension vector by $\beta$. We compute
	$$
		\dim M^{\hat{\theta}}(\hat{K(l+1)},\beta) = 1 - \langle \beta,\beta \rangle = 3 - x,
	$$
	so for the moduli space associated with $\beta$ to be non-empty, we need $x \geq 3$ as well.
	By symmetry we may require $m_1 < \ldots < m_x$ and $n_1 < \ldots < n_y$. Let $1 \leq c_{1} < \ldots < c_t \leq l+1$  be the complementary indexes to $m_1,\ldots,m_x, n_1, \ldots, n_y$, where $t=l+1-x-y$. Let $[M]$ be an element of the fixed point component associated with $\beta$. Abbreviate $V = T_{[M]}M^\theta(K(l+1),d)$.
	\begin{align*}
		\dim V_\chi = &~ \delta(\chi,0) - \langle \beta,s_{-\chi}(\beta) \rangle \\
			= &~ \delta(\chi,0) - \left( 2\beta_{(i,-\chi)} - \sum_{\mu=1}^{x} \beta_{(j,w_{m_\mu}-\chi)}    - 2\sum_{\xi=1}^{t} \beta_{(j,w_{c_{\xi}}-\chi)}\right) \\
			=&-\delta(\chi,0) + \sum_{\mu=1}^{x} \beta_{(j,w_{m_\mu}-\chi)}    + 2\sum_{\xi=1}^{t} \beta_{(j,w_{c_{\xi}}-\chi)} \\
			=&-\delta(\chi,0) + \sum_{\mu=1}^{x}  \sum_{\mu'=1}^{x}  \delta(\chi, w_{m_\mu}-w_{m_{\mu'}}) +2\sum_{\mu=1}^{x}  \sum_{\nu=1}^{y}  \delta(\chi, w_{m_\mu}-w_{n_{\nu}}) \\ 
			& ~+ 2\sum_{\xi=1}^{t} \sum_{\mu=1}^{x}  \delta(\chi,w_{c_{\xi}}-w_{m_{\mu}})+ 4\sum_{\xi=1}^{t} \sum_{\nu=1}^{y}  \delta(\chi,w_{c_{\xi}}-w_{n_{\nu}})&
	\end{align*}
	We obtain the following value for the dimension of the attractor of $[M]$:
	\begin{align*}
		\dim \Att^+([M]) =  &\sum_{\chi > 0} \dim V_\chi \\
		= \binom{x}{2}
		  &  + 2 \sharp\{(\mu,\nu)\in\{1,\ldots,x\}\times\{1,\ldots,y\}\mid m_\mu<n_{\nu} \}\\
		  &  + 2 \sharp\{(\xi,\mu)\in\{1,\ldots,t\}\times\{1,\ldots,x\}\mid c_\xi< m_{\mu} \}\\
		  &  + 4 \sharp\{(\xi,\nu)\in\{1,\ldots,t\}\times\{1,\ldots,y\}\mid c_\xi< n_{\nu} \}
	\end{align*}
	We abbreviate $d_2^+(y,m_*,n_*) := \dim \Att^+([M])$. Note that $x = 2(r-y)+1$.
	
	Let us analyze the structure of the fixed point component $\smash{M^{\hat{\theta}}(\hat{K(l+1)},\beta)}$. It is easy to see that this moduli space is isomorphic to the moduli space of the $x$-subspace quiver ($x$ sources $i_1,\ldots,i_x$ and one sink $j$) with respect to the dimension vector
	$$
		\begin{tikzcd}
			1 \arrow{dr}{}& \\[-2em]
			\vdots & 2. \\[-1.5em]
			1 \arrow{ur}{}&
		\end{tikzcd}
	$$
	The stability condition is given by $\theta(e) = -e_j$. This subspace quiver moduli space can be seen to be isomorphic to the moduli space stable configurations $(p_1,\ldots,p_x)$ on $\P^1$ up to simultaneous $\PGL_2$-action; stability in this context means that at most $\lfloor x/2 \rfloor$ of $p_1,\ldots,p_x$ coincide. This is a classical moduli space. Its dimension is $x-3$ and its Betti numbers have been determined by Kirwan in \cite[Ex.\ 5.18]{Kirwan:84}. She shows that the dimension of the degree $2j$ cohomology group is
	$$
		b_j(x) := \dim H^{2j}(((\P^1)^x)^{\st}/\PGL_2;\Q) = 1 + \sum_{\nu = 1}^{\min(j,x-3-j)} \binom{x-1}{j}.
	$$
	
	Using the computations of the dimensions of the attracting sets, we arrive at the following formula for the Poincar\'{e} polynomial of the moduli space $M^\theta(K(l+1),d)$:
	
	\begin{prop}\label{prop:poincareEx}
		The Poincar\'{e} polynomial of the stable moduli space $X = M^\theta(K(l+1),d)$ of the $l+1$-Kronecker quiver for the dimension vector $d = (2,2r+1)$ is given by
		\begin{align*}
			P_X(t) =& \sum_{1 \leq m < n \leq l+1}\ \sum_{\substack{1 \leq m_1< \ldots <m_r \leq l+1 \\ m \notin \{m_1,\ldots,m_r\}}}\ \sum_{\substack{1 \leq n_1< \ldots <n_r \leq l+1 \\ n \notin \{n_1,\ldots,n_r\}}} t^{2d_1^+(m,m_*,n,n_*)} \\
			&+ \sum_{y=2r-l}^{r-1}\ \sum_{\substack{1 \leq m_1 < \ldots < m_{2(r-y)+1} \leq l+1 \\ 1 \leq n_1 < \ldots< n_y \leq l+1\\\text{disjoint}}} t^{2d_2^+(y,m_*,n_*)}\cdot \sum_{j=0}^{2(r-y)-2} b_j(2(r-y)+1)t^{2j}
		\end{align*}
	\end{prop}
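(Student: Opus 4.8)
\textit{Plan of proof.} The strategy is to feed the two families of covering dimension vectors enumerated above into Corollary \ref{c:poincare}; all the quantitative work (the weight-space dimensions, the attractor dimensions $d_1^\pm$, $d_2^+$, and the Betti numbers $b_j(x)$) has already been recorded. Since $K(l+1)$ is acyclic and $d=(2,2r+1)$ is $\theta$-coprime, Corollary \ref{c:poincare} applies and gives
$$
	P_X(t) = \sum_\beta t^{2d_\beta}\, P_{M^{\hat{\theta}}(\hat{K(l+1)},\beta)}(t),
$$
the sum ranging over equivalence classes of dimension vectors $\beta$ of $\hat{K(l+1)}$ compatible with $d$; classes with empty moduli space drop out, so it suffices to match the non-empty ones with the two sums in the statement.

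First I would establish the enumeration. Since $c(\beta)=(2,2r+1)$, the restriction of $\beta$ to the $i$-vertices has total weight $2$, so after applying a suitable shift $s_\chi$ either $\beta$ equals $2$ on a single $i$-vertex, normalized to $(i,0)$, or $\beta$ equals $1$ on two distinct $i$-vertices, normalized (using the shift to place the unique $j$-vertex joined to both of them at $(j,0)$) to $(i,-w_m)$ and $(i,-w_n)$. Here one uses that the weights satisfy $w_1 \gg \cdots \gg w_{l+1} > 0$, so that no two vertices of the support coincide beyond those forced by the combinatorics; with this genericity the support subquiver is exactly one of the two pictures drawn above, and all $\delta$-symbols in the dimension formulas take the values stated there. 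In the second case the support is a tree on $2r+3$ vertices with $2r+2$ edges, hence $\langle\beta,\beta\rangle_{\hat{K(l+1)}}=1$: $\beta$ is a real root and $M^{\hat{\theta}}(\hat{K(l+1)},\beta)$ is a single point $[M_\beta]$. Up to equivalence these $\beta$ correspond to a pair $m\neq n$ together with $r$-element index sets $\{m_1<\cdots<m_r\}$, $\{n_1<\cdots<n_r\}$ avoiding $m$ resp.\ $n$; the symmetry interchanging the two ``spiders'' is removed by requiring $m<n$, and distinct such data give inequivalent $\beta$ --- precisely the index set of the first sum. In the first case the Euler-form computation above gives $\dim M^{\hat{\theta}}(\hat{K(l+1)},\beta)=x-3$ with $x=2(r-y)+1$, where $y$ counts the $j$-vertices carrying $\beta=2$; non-emptiness forces $x\geq 3$, and disjointness of the $x+y$ occupied indices inside $\{1,\dots,l+1\}$ forces $x+y\leq l+1$, together giving $2r-l\leq y\leq r-1$. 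Up to equivalence these $\beta$ are parametrized by $y$ and the disjoint increasing sequences $m_*=(m_1<\cdots<m_x)$, $n_*=(n_1<\cdots<n_y)$ --- the index set of the second sum.

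Next I would assemble the summands. For $\beta$ in the first family, $M^{\hat{\theta}}(\hat{K(l+1)},\beta)$ is a point, so its Poincar\'e polynomial is $1$, while by Theorem \ref{t:bb} its exponent is $d_\beta=\dim\Att^+([M_\beta])=\sum_{\chi>0}\dim V_\chi=d_1^+(m,m_*,n,n_*)$; this $\beta$ contributes $t^{2d_1^+(m,m_*,n,n_*)}$. For $\beta$ in the second family, the component $M^{\hat{\theta}}(\hat{K(l+1)},\beta)$ is isomorphic to the $x$-subspace quiver moduli with dimension vector $(1,\dots,1;2)$ and stability $\theta(e)=-e_j$, hence to the classical moduli $((\P^1)^x)^{\st}/\PGL_2$ of $x$ points on $\P^1$ (semi-stability equals stability since $x$ is odd); by Kirwan's computation its Poincar\'e polynomial is $\sum_{j=0}^{x-3}b_j(x)\,t^{2j}=\sum_{j=0}^{2(r-y)-2}b_j(2(r-y)+1)\,t^{2j}$, while its exponent is $d_\beta=\sum_{\chi>0}\dim V_\chi=d_2^+(y,m_*,n_*)$. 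Summing the two families over their respective index sets yields exactly the asserted formula.

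The quantitative inputs --- the Euler-form bookkeeping, already done in the text, and Kirwan's Betti numbers --- are not where the difficulty lies. The step requiring the most care is the enumeration: verifying that the two cases above exhaust all compatible $\beta$ with non-empty moduli, that the genericity of the weights $w_1\gg\cdots\gg w_{l+1}$ rules out every non-forced coincidence of support vertices (so that the formulas for $d_1^\pm$ and $d_2^+$ hold verbatim), and that the listed parameters --- after the ``by symmetry'' reduction $m<n$ and the disjointness requirement on $m_*,n_*$ --- are in bijection with the equivalence classes of $\beta$, with no over- or under-counting.
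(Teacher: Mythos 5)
Your proposal is correct and follows essentially the same route as the paper: the paper treats this proposition as a summary of the immediately preceding computations, namely the enumeration of the two families of compatible covering dimension vectors (real roots giving isolated fixed points with attractor dimension $d_1^+$, and the $\beta$ with a $2$ at a single $i$-vertex whose fixed components are identified with stable configurations of $x=2(r-y)+1$ points on $\P^1$ with Kirwan's Betti numbers $b_j(x)$), fed into Corollary \ref{c:poincare}. Your bookkeeping (the ranges $2r-l\leq y\leq r-1$, the $m<n$ symmetry reduction, and $d_\beta=\sum_{\chi>0}\dim V_\chi$) matches the paper's.
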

	
	As a final application, we derive a cellular decomposition of a moduli space of the $3$-Kronecker quiver. Consider the dimension vector $d = (2,3)$. This means $l=r=2$. The dimension of the moduli space $M^\theta$ is $1 - \langle d,d \rangle_Q = 6$. As we have shown before, there are 13 stable $T$-fixed points which correspond to the following two types of covering quivers:
	$$
		\begin{tikzcd}
					&[-1.5em](j,-w_m+w_{m_1}) 	&[-1.5em] \\
			(i,-w_m) \arrow{r}{} \arrow{ur}{}	& (j,0)		& (j,-w_n) \arrow{l}{} \arrow{dl}{} \\
					& (j,-w_n+w_{n_1})
		\end{tikzcd}
		\quad \quad
		\begin{tikzcd}
				& (j,w_1) \\
			(i,0) \arrow{r}{} \arrow{ur}{} \arrow{dr}{} & (j,w_2)\\
				& (j,w_3)
		\end{tikzcd}
	$$
	In the first type, we have $m_1,m,n,n_1 \in \{1,2,3\}$ such that $m_1 \neq m \neq n \neq n_1$. If we require $m < n$, then they are pairwise non-equivalent. There are 12 of those:
	$$
		1231, 2121, 1232, 2131, 3121, 3131, 2132, 3231, 2123, 3132, 3123, 3232.
	$$

	Let $\beta$ be a dimension vector of the first kind, given by the numbers $m_1,m,n,n_1$.
	We introduce the following symbols: we set $\delta_{x<y}=1$ if $x<y$ and $\delta_{x<y}=0$, otherwise. Then the formula for the dimension of the attractor of the fixed point $[M_\beta]$ reduces to
	\begin{align*}
		\dim \Att^+([M_\beta]) = -1 +\delta_{m<m_1}+\delta_{n<n_1}
		+ \delta_{m_2<m}+ \delta_{n_2<n}+ \delta_{m_2<m_1}+ \delta_{ \min(m_2,n)<\min(n_1,m) }\\+ \delta_{n_2 < n_1}+ \delta_{ \min(n_2,m) < \min(m_1,n)}
	\end{align*}
	For the unique fixed point $[M]$ which belongs to the covering dimension vector of the second kind, the attractor dimension formula reduces to $\dim \Att^+([M]) =   \binom{3}{2}  =3$.
	Proposition \ref{prop:poincareEx} yields the following expression for the Poincar\'{e} polynomial 
	$$
		P_{M^\theta(K(3),(2,3))}(t) = 1 + t^2 + 3t^4 + 3t^6 + 3t^8 + t^{10} + t^{12}.
	$$
	Finally we use Theorem \ref{t:att_explicit} to obtain a cell decomposition of the moduli space.
	\begin{prop}
		The moduli space $M^\theta(K(3),(2,3))$ has a cell decomposition which is given in the following table: 
		\begin{center}
		\begin{tabular}{cc|cc}
			$\beta$ & $\Att^+([M_\beta])$ & $\beta$ & $\Att^+([M_\beta])$ \\\hline
			$1231$ & 
			$\left( \begin{array}{cc} 0&0\\1&0\\0&1 \end{array}\right), 
			\left( \begin{array}{cc} 1&0\\0&0\\0&0 \end{array}\right), \left( \begin{array}{cc} 0&1\\0&0\\0&0\end{array}\right)$
			&
			$2312$ & 
			$\left( \begin{array}{cc} \color{purple}*&*\\1&0\\0&* \end{array}\right), 
			\left( \begin{array}{cc} 1&0\\0&0\\0&1 \end{array}\right), \left( \begin{array}{cc} 0&\color{purple}0\\0&1\\0&0 \end{array}\right)$ 
			\\
			$2121$ &
			$\left( \begin{array}{cc} 0&0\\1&0\\0&1 \end{array}\right), 
			\left( \begin{array}{cc} 1&0\\0&1\\0&0 \end{array}\right), \left( \begin{array}{cc} 0&*\\0&0\\0&0 \end{array}\right)$
			&
			$3231$ & 
			$\left( \begin{array}{cc} *&0\\ *&0\\0&1 \end{array}\right), \left( \begin{array}{cc} \color{purple}0&*\\1&\color{purple}*\\0&0 \end{array}\right), 
			\left( \begin{array}{cc} 1&\color{purple}0\\0&1\\ 0&0 \end{array}\right)$ 
			\\
			$1232$ &
			$\left( \begin{array}{cc} 0&*\\0&*\\1&0 \end{array}\right), 
			\left( \begin{array}{cc} 1&0\\0&1\\0&0 \end{array}\right), \left( \begin{array}{cc} 0&1\\0&0\\0&0 \end{array}\right)$
			 &
			$2123$ & 
			$\left( \begin{array}{cc} \color{purple}*&*\\\color{green}*&*\\1&\color{purple}0 \end{array}\right), 
			\left( \begin{array}{cc} 1&\color{purple}0\\0&\color{green}0\\0&1 \end{array}\right), 
			\left( \begin{array}{cc} 0&0\\0&1\\0&0 \end{array}\right)$
			\\
			$2131$ & 
			$\left( \begin{array}{cc} \color{purple}*&0\\1&0\\0&1 \end{array}\right), 
			\left( \begin{array}{cc} 1&0\\0&*\\0&0 \end{array}\right), \left( \begin{array}{cc} 0&\color{purple}0\\0&1\\0&0 \end{array}\right)$ 
			&
			$3132$ &
			$\left( \begin{array}{cc} \color{purple}0&*\\1&\color{purple}*\\0&* \end{array}\right), 
			\left( \begin{array}{cc} *&0\\0&0\\0&1 \end{array}\right), \left( \begin{array}{cc} 1&\color{purple}0\\0&1\\0&0 \end{array}\right)$ 
			\\
			$3121$ &$\left( \begin{array}{cc} \color{purple}*&0\\1&0\\ 0&1 \end{array}\right), 
			\left( \begin{array}{cc} *&\color{purple}0\\0&1\\0&0 \end{array}\right), 
			\left( \begin{array}{cc} 1&0\\0&0\\0&0 \end{array}\right)$
			&
			$3123$ &
			$\left( \begin{array}{cc} \color{purple}*&*\\\color{green}*&*\\1&0 \end{array}\right), 
			\left( \begin{array}{cc} *&\color{purple}0\\0&\color{green}0\\0&1 \end{array}\right), 
			\left( \begin{array}{cc} 1&0\\0&1\\0&0 \end{array}\right)$
			\\
			$3131$ &
			$\left( \begin{array}{cc} 0&0\\1&0\\0&1 \end{array}\right), 
			\left( \begin{array}{cc} *&*\\0&*\\0&0 \end{array}\right),
			\left( \begin{array}{cc} 1&0\\0&1\\0&0 \end{array}\right)$
			&
			$3232$ &
			$\left( \begin{array}{cc} *&*\\ *&*\\ *&* \end{array}\right), 
			\left( \begin{array}{cc} 0&0\\1&0\\0&1 \end{array}\right),
			\left( \begin{array}{cc} 1&0\\0&1\\0&0 \end{array}\right)$
			\\\hline
			$(2,1,1,1)$ &
			$\left( \begin{array}{cc} *&0\\ *&0\\0&1 \end{array}\right), 
			\left( \begin{array}{cc} \color{purple}0&\color{purple}*\\ 1&1\\0&0 \end{array}\right), 
			\left( \begin{array}{cc} 1&0\\0&0\\0&0 \end{array}\right)$ &\\
		\end{tabular}
		\end{center}
		The colors in the above table indicate where a choice of a complement has been made.
	\end{prop}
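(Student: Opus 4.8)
The plan is to run the construction of Theorem~\ref{t:att_explicit} on each of the thirteen torus-fixed points and to read off the resulting normal forms.

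First I would record the structural input: by Theorem~\ref{t:bb} the variety $M^\theta(K(3),(2,3))$ is the disjoint union of the plus-attractors $\Att^+([M_\beta])$ over the thirteen equivalence classes of covering dimension vectors compatible with $(2,3)$ listed above, and by Theorem~\ref{t:att_explicit} each $\Att^+([M_\beta])$ is an affine space. This already makes the partition a cell decomposition, so what remains is to pin down an explicit affine representative $\{M_\beta\}+R'$ inside $R(K(3),(2,3))^\theta$ for every $\beta$.

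For a fixed $\beta$ I would proceed exactly as in the proof of Theorem~\ref{t:att_explicit}. The covering dimension vector dictates the weight-space decompositions $V_i=\bigoplus_n V_{i,n}$ and $V_j=\bigoplus_n V_{j,n}$ of the two underlying vertices --- for the first type $V_i$ is one-dimensional in each of the degrees $-w_m,-w_n$ and $V_j$ is one-dimensional in $0,-w_m+w_{m_1},-w_n+w_{n_1}$, while for the second type $V_i$ is two-dimensional in degree $0$ and $V_j$ is one-dimensional in $w_1,w_2,w_3$. Ordering the finitely many occurring weights, using $w_1\gg w_2\gg w_3>0$ to break ties, fixes bases of $V_i$ and $V_j$ and hence a presentation of $R(K(3),(2,3))$ by triples of $3\times 2$ matrices. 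With $F_{i,n}=\bigoplus_{m\le n}V_{i,m}$ one then reads off $R_{F_*}(M_\beta)=\{M_\beta\}+R_{F_*}$ as the triples of matrices supported in the ``strictly sub-diagonal'' positions $m<n+w_a$ and agreeing with $M_\beta$ in the positions $m=n+w_a$ --- this is precisely the matrix shape in the table. Next one computes $[\mathfrak u_{F_*},M_\beta]\subset R_{F_*}$, spanned by the brackets $[x,M_\beta]$ for $x$ ranging over a homogeneous basis of $\mathfrak u_{F_*}$; such $x$ are the weight-lowering elementary matrices in the $\GL$-blocks at the two vertices, and bracketing with $M_\beta$ amounts, up to sign, to column operations on the three matrices of $M_\beta$ coming from the $V_i$-block and row operations coming from the $V_j$-block. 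Using these operations to clear all entries above the pivot $1$'s produces a vector-space complement $R'=\bigoplus_{k>0}R'_k$ of $[\mathfrak u_{F_*},M_\beta]$ in $R_{F_*}$; the positions at which a genuine choice survives are exactly the colored entries in the table, and by Theorem~\ref{t:att_explicit} the affine subspace $\{M_\beta\}+R'$ maps isomorphically onto $\Att^+([M_\beta])$.

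Carrying this out for all thirteen fixed points yields the table, and I would double-check consistency against the already-computed attractor dimensions $d_1^+$, $d_2^+$ and against the Poincar\'{e} polynomial $1+t^2+3t^4+3t^6+3t^8+t^{10}+t^{12}$ found above. I expect the only genuinely fiddly part to be the bookkeeping for those $\beta$ whose twisting weights partly coincide, so that $[\mathfrak u_{F_*,k},M_\beta]$ is a proper subspace of $R_{F_*,k}$ and a nontrivial complement must actually be chosen --- this, rather than any conceptual difficulty, is the main obstacle. The second-type fixed point, where $\mathfrak u_{F_*}$ comes entirely from the $\GL_3$-block at the sink (the two-dimensional space $V_i$ carrying no filtration), is handled in the same way.
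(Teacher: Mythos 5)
Your proposal is correct and follows exactly the paper's (largely implicit) argument: Theorem \ref{t:bb} gives the partition into the thirteen attractors, Theorem \ref{t:att_explicit} identifies each with $\{M_\beta\}+R'$, and the complement $R'$ is found by the same weight-ordering and row/column-elimination procedure that the paper demonstrates in the proof of the preceding generic-normal-form proposition. No substantive difference.
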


	\bibliographystyle{abbrv}

\end{document}